\newtheorem{thm}{Theorem}[section]
\newtheorem{cor}[thm]{Corollary}
\newtheorem{lem}[thm]{Lemma}
\newtheorem{conj}[thm]{Conjecture}
\newtheorem{claim}{Claim}
\newcommand{\pf}{\noindent{\it Proof.} }
\newcommand{\bin}{\operatorname{Bin}}
\newcommand{\Deg}{\operatorname{Deg}}
\begin{document}

\title{On the rainbow matching conjecture for 3-uniform hypergraphs}
\author{
	Jun Gao\thanks{School of Mathematical Sciences, USTC, Hefei, Anhui 230026, China. Email: gj0211@mail.ustc.edu.cn.}~~~~~~~	
	Hongliang Lu\thanks{School of Mathematics and Statistics, Xi'an Jiaotong University, Xi'an, Shaanxi 710049, China. Partially supported by NSFC grant 11871391 and Fundamental Research Funds for the Central Universities. Email: luhongliang@mail.xjtu.edu.cn.}~~~~~~~
	Jie Ma\thanks{School of Mathematical Sciences, USTC, Hefei, Anhui 230026, China. Research supported by the National Key R and D Program of China 2020YFA0713100, National Natural Science Foundation of China grants 11622110 and 12125106, and Anhui Initiative in Quantum Information Technologies grant AHY150200. Email: jiema@ustc.edu.cn.}~~~~~~~
    Xingxing Yu\thanks{School of Mathematics, Georgia Institute of Technology, Atlanta, GA 30332, USA. Partially supported by NSF grant DMS-1954134. Email: yu@math.gatech.edu.}~~~~~
}

\date{}

\maketitle

\begin{abstract}
Aharoni and Howard, and, independently, Huang, Loh, and Sudakov proposed the following rainbow version of Erd\H{o}s matching conjecture:
For positive integers $n,k,m$ with $n\ge km$, if each of the families $F_1,\ldots, F_m\subseteq {[n]\choose k}$ has size more than $\max\{\binom{n}{k} - \binom{n-m+1}{k}, \binom{km-1}{k}\}$,
then there exist pairwise disjoint subsets $e_1,\dots, e_m$ such that $e_i\in F_i$ for all $i\in [m]$.
We prove that there exists an absolute constant $n_0$ such that this rainbow version holds for $k=3$ and $n\geq n_0$. We convert this rainbow matching problem to a matching problem on a special hypergraph $H$.
We then combine several existing techniques on matchings in uniform hypergraphs: find an absorbing matching $M$ in $H$;
use a randomization process of Alon et al. to find an almost regular subgraph of $H-V(M)$; and find an almost perfect matching in $H-V(M)$.
To complete the process, we also need to prove a new result on matchings in 3-uniform hypergraphs,
which can be viewed as a stability version of a result of {\L}uczak and Mieczkowska and might be of independent interest.
\end{abstract}

{{\bf Key words:}  Rainbow matching conjecture, Erd\H{o}s matching conjecture, Stability

\vskip 0.3cm

{{\bf MSC2010:} 05C65, 05D05.}

\vskip 0.3cm

\section{Introduction}
For a positive integer $k$ and a set $V$, let $[k]:=\{1,...,k \}$ and $\binom{V}{k}:=\{A\subseteq V: |A|=k\}$.	
A hypergraph $H$ consists of a vertex set $V(H)$ and an edge set $E(H)\subseteq 2^{V(H)}$.
A hypergraph $H$ is \emph{$k$-uniform} if all its edges have size $k$ and we call it a \emph{$k$-graph} for short.
Throughout this paper, we often identify $E(H)$ with $H$ when there is
no confusion and, in particular, denote by $|H|$ the number of edges
in $H$. Given a set $T$ of edges in $H$, we use $V(T)$ to denote $\bigcup _{e\in T} e$.
Given a vertex subset $S\subseteq V(H)$ in $H$, we use $H[S]$ to denote the subgraph of $H$ induced by $S$, and let $H-S = H[V(H)\setminus S]$.

A \emph{matching} in a hypergraph $H$ is a set of pairwise disjoint edges in $H$.
We use $\nu(H)$ to denote the maximum size of a matching in $H$.
Let $\mathcal{F}= \{F_1, ... ,F_m\}$ be a family of hypergraphs on the same vertex set.
A set of $m$ pairwise disjoint edges is called a \emph{rainbow matching} for $\mathcal{F}$ if each edge is from a different $F_i$.
If such a matching exists, then we also say that $\mathcal{F}$ {\it admits a rainbow matching}.

A classical problem in extremal set theory asks for the maximum number of edges in $n$-vertex $k$-graphs $H$ with $\nu(H)<m$.
Let $n,k,m$ be positive integers with $n\geq km$. The $k$-graphs $S(n,m,k):=\binom{[n]}{k}\backslash
\binom{[n]\backslash [m-1]}{k}$ and  $D(n,m,k):= \binom{[km-1]}{k}$ on the same vertex set $[n]$ do
not have matchings of size $m$.
Erd\H{o}s \cite{Erdos65} conjectured in 1965 that among all $k$-graphs
with no matching of size $m$, $S(n,m,k)$ or $D(n,m,k)$ has the maximum number of edges:
Any $n$-vertex $k$-graph $H$ with $\nu(H)<m$ contains at most $$f(n,m,k):= \max \left\{ {n\choose k}-{n-m+1\choose k}, {km-1\choose k}\right\}$$ edges.
This is often referred to as the {\it Erd\H{o}s matching conjecture} in
the literature, and there has been extensive research on this
conjecture, see, for instance, \cite{AFH12,BDE76,Fr13,F17,Fr172,FLM12,FK18,LM14}.
In particular, the special case for $k=3$ was settled for large $n$ by {\L}uczak and Mieczkowska \cite{LM14} and
completely resolved by Frankl \cite{F17}.

The following analogous conjecture, known as the {\it
  rainbow matching conjecture}, was made by Aharoni and Howard
\cite{ADP} and, independently, by Huang, Loh, and Sudakov
\cite{HLS12}. For related topics on rainbow type problems, we refer the interested reader to \cite{JK20,KK20,LYY20,Py86}.

\begin{conj}[\cite{ADP,HLS12}]\label{RMConj}
Let $n, k, m$ be positive integers with $n\geq km$.
Let $\mathcal{F}= \{F_1, ... ,F_m\}$ be a family of $k$-graphs on the same vertex set $[n]$ such that $|F_i|> f(n,m,k)$ for all $i\in [m]$.
Then $\mathcal{F}$ admits a rainbow matching.
\end{conj}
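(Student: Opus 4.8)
The plan is to prove Conjecture~\ref{RMConj} for $k=3$ and all $n$ at least some absolute constant $n_0$, following the strategy announced in the abstract. First I would recast the problem as an ordinary matching problem: introduce new ``colour vertices'' $c_1,\dots,c_m$ and let $H$ be the $4$-graph on $[n]\cup\{c_1,\dots,c_m\}$ with edge set $\{\,e\cup\{c_i\}:\ i\in[m],\ e\in F_i\,\}$. Every edge of $H$ meets the colour set $C:=\{c_1,\dots,c_m\}$ in exactly one vertex, so a matching of size $m$ in $H$ is precisely a rainbow matching for $\mathcal F$; it therefore suffices to find a matching of $H$ saturating $C$. A greedy bound already goes part of the way: if the maximum rainbow matching $\{e_1,\dots,e_r\}$ has $r<m$, then for each unused colour $i$ the family $F_i$ lies among the triples meeting $e_1\cup\dots\cup e_r$, so $|F_i|\le\binom{n}{3}-\binom{n-3r}{3}$, which together with $|F_i|>\binom{n}{3}-\binom{n-m+1}{3}$ forces $3r\ge m$. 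The work is to improve $r=\Omega(m)$ to $r=m$. I would dispose of the range $n\ge C_0m$ (for a suitable absolute constant $C_0$) using the known theorem of Huang, Loh, and Sudakov, and hence assume $3m\le n\le C_0m$, so that $m$ is also large; finally I would split on whether some $F_i$ is within $\varepsilon\binom{n}{3}$ edges of a copy of $S(n,m,3)$ or of $D(n,m,3)$ (the \emph{extremal} case) or not (the \emph{generic} case).

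In the generic case every link $F_i$ is a dense $3$-graph that is far from both extremal configurations; the stability theorem below then says that $F_i$, and its restriction to almost any large subset of $[n]$, has a matching of size $m$ with room to spare, and this robustness is what drives the probabilistic machinery. I would first build an \emph{absorbing matching} $M$ in $H$: a matching of size $o(m)$, on few colour vertices and few $[n]$-vertices, such that for every ``compatible'' leftover $L$ consisting of at most $\delta m$ colour vertices together with $3|L|$ vertices of $[n]$, the subgraph $H[V(M)\cup L]$ still has a matching saturating $V(M)\cup L$; the density of the links supplies many absorbers for each possible leftover, and a standard random selection glues them into one matching $M$. Next I would apply the randomization process of Alon et al.\ to $H-V(M)$ to pass to a spanning sub-hypergraph $\widetilde H$ on the remaining colour vertices together with a carefully chosen set $U\subseteq[n]\setminus V(M)$ of size about $3$ times the number of remaining colours, arranged so that $\widetilde H$ is $(1+o(1))d$-regular with all pair degrees $o(d)$ and so that saturating its colour side is the same as a near-perfect matching of $\widetilde H$. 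The Pippenger--Spencer theorem (R\"odl nibble) then yields a matching of $\widetilde H$ leaving only $o(m)$ vertices uncovered, and in particular at most $\delta m$ colour vertices; the absorbing property of $M$ then absorbs those leftover colour vertices, producing a matching of $H$ that saturates $C$.

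In the extremal case some $F_j$ is $\varepsilon$-close to $S:=S(n,m,3)$ or to $D:=D(n,m,3)$. Here I would feed the union $F_1\cup\dots\cup F_m$, and suitably chosen sub-families, into the stability theorem to force a rigid global structure: in the $S$-case essentially all the $F_i$ must concentrate on a common set of $m-1$ vertices, and in the $D$-case on a common set of $3m-1$ vertices, since otherwise some family would offer a disjoint edge extending the matching. The rainbow matching is then read off from this structure by peeling the colours one at a time — deleting an edge of some $F_i$ that avoids the special set lowers the threshold $f$ and leaves the remaining families above it — and inducting on $m$.

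The engine, and the main obstacle, is the new stability statement: for $n$ large, if an $n$-vertex $3$-graph $G$ has $\nu(G)<m$ and $|G|\ge f(n,m,3)-\gamma\binom{n}{3}$, then $G$ can be turned into a subgraph of $S(n,m,3)$ or of $D(n,m,3)$ by deleting $o(\binom{n}{3})$ edges. I would prove it along the compression-and-counting lines of {\L}uczak and Mieczkowska (with Frankl's refinements), but now tracking the near-equality structure throughout: the two very different extremal configurations must be separated, and each of the defect inequalities in that argument shown to be stable, which is the delicate core. A secondary difficulty is adapting the absorbing method to the coloured setting: because every edge of $H$ carries a colour, an absorber must absorb leftover colour vertices together with their accompanying triples from $[n]$, so both the repertoire of absorbing configurations and the random assembly of $M$ must be designed around this bookkeeping.
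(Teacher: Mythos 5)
Your blueprint follows the paper's strategy at the macroscopic level -- recast the rainbow problem as a matching problem in a $4$-graph with one extra ``colour'' vertex per family, prove a stability version of the {\L}uczak--Mieczkowska theorem, build an absorbing matching, run the two-round randomization of Alon et al.\ and finish with a R\"odl-nibble type theorem, treating the two extremal configurations separately -- so the central ideas are correctly identified. However, several technical devices that make the argument actually close are missing.

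First, the paper works not with $H(\mathcal F)$ (your $H$) but with the \emph{larger} auxiliary $4$-graph $H^*(\mathcal F)$, obtained by adjoining $r=\lfloor n/k\rfloor - m$ extra ``dummy'' vertices $u_1,\dots,u_r$ each adjacent to every triple of $[n]$. Rainbow matchings for $\mathcal F$ correspond to matchings of size $m+r$ in $H^*(\mathcal F)$, i.e.\ near-perfect matchings, and this is exactly what makes both the absorbing step and the nibble step go through: every $u_j$ has the complete link, so (i) the absorbing matching can be written down explicitly on a prefix of $[n]$ and can absorb \emph{arbitrary} small vertex sets of the right parity, and (ii) there is no need to ``carefully choose a set $U\subseteq[n]\setminus V(M)$ of size about $3$ times the number of remaining colours'' before applying Frankl--R\"odl -- the dummy vertices equalize the degrees on the $[n]$ side automatically. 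Your proposed substitute of handpicking a balanced $U$ is precisely the hard part one wants to avoid, and there is no indication in the sketch of how to make that choice compatible with almost-regularity of the random subgraphs. Relatedly, you do not invoke the Huang--Loh--Sudakov shifting lemma to pass to a \emph{stable} family; the paper uses stability of each $F_i$ essentially everywhere (it produces the complete prefix $[s]$ that the explicit absorber lives on, it is needed in the stability lemma, and it is needed in the $H_D$-extremal argument), so it cannot be omitted.

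Second, your case split $3m\le n\le C_0 m$ overlooks the near-perfect regime $m>(1-c)n/3$, which the paper handles with a separate, purely combinatorial argument (Lemma~\ref{large-mat}); the randomization/absorption machinery is calibrated only for $m\le(1-c)n/k$, and near the boundary the constants break down. Third, the proposed treatment of the $S$-extremal case is not right as stated: when $F_i$ is close to $S(n,m,3)$ almost every edge \emph{meets} the special $(m-1)$-set, so ``delete an edge of some $F_i$ that avoids the special set'' is not the available move. The paper's mechanism is instead via saturation: combine the degree bound from Lemma~\ref{claim:Delta} with the counting in Lemma~\ref{lemma:H_S(n,m)} to show a saturated family with no full-degree vertex cannot be $\epsilon$-close to $H_S(n,m,3)$; whenever a full-degree vertex $v$ does appear in some $F$, strip $v$ (and $F$) using $f(n,m,k)\ge f(n-1,m-1,k)+\binom{n-1}{k-1}$, and iterate. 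This vertex-stripping loop is different from, and cleaner than, the edge-peeling induction you describe. For the $D$-extremal case the paper also does not induct; it constructs the rainbow matching directly from the large common clique guaranteed by stability. None of these gaps is fatal to the overall plan, but each has to be filled in essentially the way the paper does, and together they account for most of the actual technical work.
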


The case $k=2$ of this conjecture is in fact a direct consequence of an earlier result of Akiyama and Frankl \cite{AF} (which was restated \cite{F87}).
The following was obtained by Huang, Loh, and Sudakov \cite{HLS12}.
\begin{thm}[\cite{HLS12}, Theorem~3.3]\label{n>k^2}
Conjecture~\ref{RMConj} holds when $n>3k^2m$.
\end{thm}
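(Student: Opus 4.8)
Since $n>3k^2m$, it is easy to check that $\binom{n}{k}-\binom{n-m+1}{k}\ge\binom{km-1}{k}$, so $f(n,m,k)=\binom{n}{k}-\binom{n-m+1}{k}$ and the hypothesis is simply $|F_i|>\binom{n}{k}-\binom{n-m+1}{k}$ for every $i\in[m]$. The quantity $\binom{n}{k}-\binom{n-m+1}{k}$ is exactly the number of $k$-subsets of $[n]$ that meet a fixed $(m-1)$-set, so each $F_i$ contains an edge avoiding any prescribed $m-1$ vertices; moreover, since the (non-rainbow) Erd\H{o}s matching conjecture is known to hold for $n$ this large, $\nu(F_i)\ge m$ for every $i$. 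Thus each family is individually robust, and the real task is to coordinate the $m$ families into a single rainbow matching.

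The plan is to build the rainbow matching greedily, but making a careful choice at each step so that the families not yet reached stay large. Process $F_1,\dots,F_m$ in order, keeping track of the set $U$ of already-used vertices (so $|U|=(i-1)k$ before step $i$). At step $i$, among all edges of $F_i$ disjoint from $U$, pick $e_i$ so as to ``kill'' as few edges as possible in the not-yet-processed families, i.e.\ to minimize $\sum_{j>i}\bigl|\{e'\in F_j:\ e'\cap U=\emptyset,\ e'\cap e_i\ne\emptyset\}\bigr|$ (picking $e_i$ at random works just as well for the averaging below). Writing $d_i(j)$ for the number of surviving edges of $F_j$ destroyed at step $i$, the procedure runs to completion — producing the desired rainbow matching — as soon as $\sum_{i<j}d_i(j)<|F_j|$ for every $j$, since then $F_j$ still has an edge disjoint from $U$ when its turn comes.

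Everything then reduces to bounding $\sum_{i<j}d_i(j)$, and this is where $n>3k^2m$ must enter. Because $e_i$ minimizes the total number of conflicts, $d_i(j)$ is at most the average of $|\{e'\in F_j:\ e'\cap e_i\ne\emptyset\}|$ over $e_i\in F_i$ disjoint from $U$, hence at most $\bigl(\sum_{v}\deg_{F_i}(v)\,\deg_{F_j}(v)\bigr)/|F_i[U^{c}]|$, and the goal is to show this is $O(k^2/n)\,|F_j|$; summing over the at most $m-1$ steps before step $j$ would give $\sum_{i<j}d_i(j)=O(mk^2/n)\,|F_j|<|F_j|$. The main obstacle is exactly this degree estimate: it is false when a family is concentrated on a small ``hub'' — for the extremal family $S(n,m,k)$ itself a single vertex lies in nearly every edge, so deleting it is fatal and the averaging collapses. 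To handle this one needs a stability dichotomy: either $F_i$ is spread out, so that $\sum_v\deg_{F_i}(v)\deg_{F_j}(v)$ is comparable to its quasirandom value and the estimate holds, or $F_i$ is close to a star cover $S(n,m,k)$, in which case — crucially using that $|F_i|>|S(n,m,k)|$ strictly — $F_i$ contains an edge disjoint from its hub set, and one processes such families first so that no hub vertex is ever wasted. Making this dichotomy and the accompanying bookkeeping precise while keeping the constant at $3k^2$ is the real work; the alternative route through a maximum rainbow matching (take a maximum rainbow matching $\{e_1,\dots,e_t\}$ with $t<m$, note every edge of a blocked family $F_{t+1}$ meets $V(M)$, pigeonhole a popular vertex $v\in e_1$, and try to re-route $e_1$ and add an edge of $F_{t+1}$ through $v$ to reach size $t+1$) runs into the same extremal configurations and seems to need the same kind of stability input.
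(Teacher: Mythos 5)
This statement is cited in the paper from Huang--Loh--Sudakov (Theorem~3.3 of \cite{HLS12}); the paper does not reprove it, so there is no in-paper proof to compare against line by line. Judged on its own terms, your proposal is an honest sketch rather than a proof, and you say so yourself (``Making this dichotomy and the accompanying bookkeeping precise \dots is the real work''). The gap you flag is genuine, not cosmetic.

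Concretely, the obstruction you identify is exactly where a naive greedy dies, and it does so by more than a constant that careful bookkeeping can rescue. Suppose all $F_i$ are close to a common $S(n,m,k)$. Then $|F_i|\approx(m-1)\binom{n-1}{k-1}$, while the number of $k$-sets meeting a fixed set of $(m-1)k$ vertices is $\approx(m-1)k\binom{n-1}{k-1}$, a factor $k$ larger; so the greedy invariant $\sum_{i<j}d_i(j)<|F_j|$ is not automatic, and your Cauchy--Schwarz bound $\sum_v\deg_{F_i}(v)\deg_{F_j}(v)\le(k^2/n)|F_i||F_j|$ fails in this regime: with $m-1$ hub vertices of degree $\binom{n-1}{k-1}$, one gets $\sum_v\deg_{F_i}(v)^2\gtrsim|F_i|\binom{n-1}{k-1}$, and the required inequality $\binom{n-1}{k-1}\le k^2|F_i|/n$ would force $n\lesssim k^2 m$, the opposite of the hypothesis. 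So the ``spread-out'' branch of your dichotomy is empty precisely in the extremal configuration, and everything is pushed onto the ``close to a star'' branch, where you have only a heuristic (``process hub families first, use the single extra edge off the hub''). That heuristic is not developed: you do not say how to choose disjoint off-hub edges when several families share (or nearly share) the same hub, how to order families when hubs overlap partially, or how the subsequent greedy on the remaining families closes with constant $3$ rather than, say, $3k$ or $k^2$. Pinning that down \emph{is} the content of the theorem; a correct argument at the stated threshold $n>3k^2m$ needs either a genuinely quantitative stability statement for near-extremal $F_i$ together with the bookkeeping you gesture at, or a different global mechanism (for instance, a randomized or extremal-averaging selection of the whole matching rather than a step-by-step greedy), which is closer in spirit to what \cite{HLS12} actually do. As written, the proposal reduces the theorem to an unproved dichotomy of comparable difficulty, so it does not constitute a proof.

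One smaller point worth noting: your claim that each $F_i$ ``contains an edge avoiding any prescribed $m-1$ vertices'' is correct and is the seed of the off-hub idea, but it gives you one edge per family, with no control on how these $m$ special edges interact; turning $m$ individual escape edges into $m$ pairwise disjoint ones across different families is again exactly the coordination problem the theorem is about, so this observation cannot by itself carry the argument.
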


Keller and Lifshitz \cite{KL17} proved that Conjecture~\ref{RMConj} holds when $n\geq f(m)k$ for some large constant $f(m)$ which only depends on $m$,
and this was further improved to $n=\Omega(m\log m)k$ by Frankl and Kupavskii \cite{FK20}.
Both proofs use the junta method.
Very recently, Lu, Wang, and Yu \cite{LWY20} showed that Conjecture~\ref{RMConj} holds when $n\geq 2km$ and $n$ is sufficiently large.

The following is our main result, which proves Conjecture~\ref{RMConj} for $k=3$ and sufficiently large $n$.

\begin{thm}\label{main}
There exists an absolute constant $n_0$ such that the following holds for all $n\geq n_0$.
For any positive integers $n, m$ with $n\geq 3m$, let $\mathcal{F}= \{F_1, ... ,F_m\}$ be a family of $3$-graphs on the same vertex set $[n]$ such that $|F_i|> f(n,m,3)$ for all $i\in [m]$.
Then $\mathcal{F}$ admits a rainbow matching.
\end{thm}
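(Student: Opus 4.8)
The plan is to recast the rainbow problem as an ordinary matching problem and then combine absorption with the R\"odl nibble, peeling off the near-extremal configurations for a separate, more hands-on treatment.

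\emph{Step 1: reduction to a matching problem.} Introduce a set $W=\{w_1,\dots,w_m\}$ of $m$ new vertices and let $H$ be the $4$-uniform hypergraph on $[n]\cup W$ with edge set $\bigcup_{i\in[m]}\{e\cup\{w_i\}:e\in F_i\}$. Every edge of $H$ meets $W$ in exactly one vertex, so a matching of size $m$ in $H$ is precisely a rainbow matching for $\mathcal F$; equivalently we must produce a matching of $H$ saturating $W$, and trivially $\nu(H)\le m$. Here $\deg_H(w_i)=|F_i|>f(n,m,3)$. By Theorem~\ref{n>k^2} we may assume $n\le 27m$, and in fact $n\le 6m$ using the result of Lu--Wang--Yu \cite{LWY20}; in particular $m$ and $n$ are both large with $n=\Theta(m)$, which is the only regime needing new ideas.

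\emph{Step 2: a dichotomy, and the generic case.} Fix constants $0<\gamma\ll\delta\ll1$. Call $\mathcal F$ \emph{$\delta$-extremal} if, after deleting at most $\delta\binom n3$ edges, some $F_i$ becomes a subgraph of a copy of $D(n,m,3)$ or of $S(n,m,3)$; this is exactly the situation where $H$ has a vertex (or tiny vertex set) of abnormally large codegree. Suppose first $\mathcal F$ is \emph{not} $\delta$-extremal; then no vertex of $[n]$ lies in more than about $\sqrt\delta\binom n2$ edges of any $F_i$, so all relevant pairwise codegrees of $H$ are $o(\deg_H(w_i))$. I would then run the three-phase argument indicated in the abstract: (i) by supersaturation — each colour vertex has a linear, well-spread neighbourhood, hence many ``absorbing gadgets'' — a random sub-collection $M$ of $O(\gamma m)$ edges of $H$ is with positive probability an \emph{absorbing matching}, meaning that for every $W'\subseteq W\setminus V(M)$ with $|W'|\le\gamma^2m$ one can re-match $V(M)$ (using a small reservoir of $[n]$-vertices set aside in advance) so that $W'$ becomes saturated as well; (ii) apply the randomization technique of Alon et al.\ to pass from $H-V(M)$ to a spanning subhypergraph that is nearly regular (every degree $(1\pm o(1))D$ for a common $D$) with codegrees $o(D)$, then invoke the R\"odl nibble / Pippenger--Spencer theorem to obtain a matching $M_1$ covering all but $o(m)$ vertices of $W$; (iii) absorb the leftover colour vertices $W\setminus V(M\cup M_1)$ into $M$. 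The resulting matching saturates $W$, giving a rainbow matching.

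\emph{Step 3: the extremal case via stability.} Now suppose $\mathcal F$ \emph{is} $\delta$-extremal and, for contradiction, has no rainbow matching, i.e.\ $\nu(H)\le m-1$. The key new ingredient is a stability version of the {\L}uczak--Mieczkowska theorem \cite{LM14}: any $3$-graph $G$ with $\nu(G)<m$ and $|G|\ge f(n,m,3)-o(n^3)$ must be ``close'' to $D(n,m,3)$ or to $S(n,m,3)$, in the sense that there is a set $T$ of at most $3m-1$ (respectively $m-1$) vertices containing (respectively meeting) all but $o(n^3)$ edges of $G$. Feeding $\delta$-extremality into this statement, and bootstrapping from one $F_i$ to all of them, one obtains a \emph{common} such set $T$. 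Since $|F_i|>f(n,m,3)\ge\binom{|T|}{3}$ in the $D$-case (and likewise $|F_i|$ exceeds the number of triples meeting $T$ in the $S$-case), every $F_i$ has an edge ``escaping'' $T$, and — exploiting that the strict surplus $|F_i|-f(n,m,3)$ must be spent near $T$ — one greedily builds a short rainbow matching covering $[n]\setminus T$ from distinct families and then completes it to a rainbow matching inside $T$ (itself a small sub-instance, handled greedily together with a brief switching argument to absorb the $O(m)$ slack a naive induction would lose). This contradicts $\nu(H)\le m-1$ and finishes the proof.

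\emph{Main obstacle.} The crux is the stability statement used in Step~3: {\L}uczak--Mieczkowska's extremal \emph{inequality} must be upgraded to a robust \emph{structural} dichotomy with an $o(n^3)$ error term of usable quantitative strength, which amounts to reworking their proof rather than quoting it — and in the $n=\Theta(m)$ regime the families are so dense that any cruder error term would be swamped. A secondary, more bureaucratic difficulty is calibrating the hierarchy $\gamma\ll\delta\ll\cdots$ so that the non-extremal hypotheses needed in Step~2 (spread-out neighbourhoods, hence controlled codegrees and abundant absorbers) and the near-extremal structure needed in Step~3 are simultaneously exhaustive and mutually exclusive.
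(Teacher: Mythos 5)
Your high-level architecture — absorption plus R\"odl nibble in the generic case, a stability-driven treatment of near-extremal configurations — tracks the paper's, but the stability lemma is deployed in the wrong place, and a key reduction is missing. In the paper, the stability result (Lemma~\ref{stability}) is used inside the \emph{non-extremal} case, not the extremal one: after the two rounds of randomization (Claim~C in the proof of Lemma~\ref{lem:not close}), each colour vertex $v_j\in R'^i$ sees a neighbourhood $F_j[R'^i\cap[n]]$ that has at least $f(n_i,m_i+\epsilon^{20}n_i,3)-\epsilon^{16}n_i^3$ edges, is stable, and is provably not close to $S(n_i,m_i,3)$ or $D(n_i,m_i,3)$; the contrapositive of Lemma~\ref{stability} then gives a matching of size $m_i+\epsilon^{20}n_i$ in $F_j[R'^i\cap[n]]$, which is exactly what produces the perfect fractional matching in each $H[R'^i]$ needed to feed into Lemma~\ref{spaning graph} and Theorem~\ref{regular}. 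In your Step~3, by contrast, you want to apply the stability statement to a $3$-graph $G$ with $\nu(G)<m$, but nothing in sight satisfies this: the hypothesis ``no rainbow matching'' controls $\nu$ of the $4$-graph $H$, not of any individual $F_i$, and the $F_i$'s can certainly have matchings of size $\geq m$. So the stability lemma has no legitimate target in your Step~3 as written.

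The second gap is the $S$-extremal configuration. The paper never runs absorption or stability on an instance where $H(\mathcal{F})$ is close to $H_S(n,m,3)$. Instead it replaces $\mathcal{F}$ by a stable \emph{saturated} family (Corollary~\ref{coro:stable-saturated}), applies Lemma~\ref{claim:Delta} to conclude that each vertex degree in each $F_i$ is either at most $\binom{n-1}{2}-\binom{n-1-3(m-1)}{2}$ or exactly $\binom{n-1}{2}$, and in the latter case deletes that vertex and drops to a smaller instance (Lemma~\ref{lem:1} guarantees the edge-count hypothesis is preserved); the iterative algorithm in Section~7 repeats this until no full-degree vertex remains, at which point the degree bound \emph{rules out} closeness to $H_S(n,m,3)$ altogether (Lemma~\ref{lemma:H_S(n,m)}), leaving only the $H_D$-close case (handled by a short elementary construction in Lemma~\ref{lemma:H_D(n,m)}, not by stability) and the non-extremal case (Lemma~\ref{lem:not close}). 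Your Step~3 proposes to finish the $S$-case by ``a brief switching argument to absorb the $O(m)$ slack,'' which does not close the gap, and your dichotomy is also drawn at the wrong level: you partition on whether some individual $F_i$ is close to $D$ or $S$, whereas what the non-extremal machinery needs is a statement about $H(\mathcal{F})$ plus the per-family degree bound; a family where exactly one $F_i$ is close to $S$ but the rest are generic would be routed by you into Step~3, where the common structure (``a common such set $T$'') you then assume simply is not available.
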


Our proof of Theorem~\ref{main} uses some new ideas and combines different techniques from
Alon-Frankl-Huang-R\H{o}dl-Rucinski-Sudakov \cite{AFH12},
{\L}uczak-Mieczkowska \cite{LM14}, and Lu-Yu-Yuan \cite{LYY19}.
(For a high level description of our proof, we refer the reader to Section 2 and/or Section 7.)
In the process, we prove a stability result on 3-graphs (see Lemma~\ref{stability}) that plays a crucial role in
our proof and might be of independent interest: If the number of edges in an $n$-vertex 3-graph $H$ with $\nu(H)<m$ is close to $f(n,m,3)$,
then $H$ must be close to $S(n,m,3)$ or $D(n,m,3)$.

The rest of the paper is organized as follows.
In Section~2, we introduce additional notation,
and state and/or prove a few lemmas for later use. 
In Section~3,  we deal with the families $\mathcal{F}$ in which most $3$-graphs are close to the same $3$-graph that is $S(n,m,3)$ or $D(n,m,3)$.
To deal with the remaining families,
we need the above mentioned stability result for matchings in 3-graphs, which is done in Section 4.
In Section~5, we show that there exists an absolute constant $c>0$
such that Theorem~\ref{main} holds for $m>(1-c)n/3$.
The proof of Theorem~\ref{main} for $m\le(1-c)n/3$ is completed in Section~6.
Finally, we complete the proof of Theorem~\ref{main} in Section 7.

\section{Previous results and lemmas}
In this section, we define saturated families and stable hypergraphs,
and state several lemmas that we will use frequently.
We begin with some notation.
Suppose that $H$ is a hypergraph and $U, T$ are subsets of $V(H)$.
Let $N_H(T) := \{A: A\subseteq V(H)\setminus T \mbox{ and } A\cup T \in E(H)\}$ be the {\it neighborhood} of $T$ in $H$, and let $d_H(T) := |N_H(T)|$.
We write $d_H(v)$ for $d_H(\{v\})$.
Let $\Delta(H) :=\max_{v\in V(H)}d_H(v)$ and $\Delta_2(H):=\max_{T\in \binom{V(H)}{2}}d_H(T)$.
In case $T\subseteq U$, we often identify $d_{H[U]}(T)$ with $d_{U}(T)$ when there is no confusion.

It will be helpful to consider ``maximal'' counterexamples to
Conjecture~\ref{RMConj}.
Let $n, k, m$ be positive integers with $n\geq km$ and let $\mathcal{F} = \{F_1,...,F_m\}$ be a family of $k$-graphs on the same vertex set $[n]$.
We say that $\mathcal{F}$ is {\bf saturated}, if $\mathcal{F}$ does not admit a rainbow matching, but for every $F\in \mathcal{F}$ and $e\notin F$,
the new family $\mathcal{F}(e,F):=(\mathcal{F}\backslash \{F\})\cup
\{F\cup \{e\}\}$ admits a rainbow matching.
The following lemma says that the vertex degrees of every $k$-graph in a saturated family are typically small.

\begin{lem}\label{claim:Delta}
Let $n,k,m$ be positive integers with $n\ge km$.
Let $\mathcal{F}= \{{F}_1, ... ,{F}_m\}$ be a saturated family of $k$-graphs on the same vertex set $[n]$.
Then for each $v\in [n]$ and each $i\in [m]$,   $d_{F_i}(v)\le
\binom{n-1}{k-1} - \binom{n-1-k(m-1)}{k-1}$ or $d_{F_i}(v) = \binom{n-1}{k-1}$.
\end{lem}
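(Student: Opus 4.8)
The plan is to argue by contradiction: suppose some $F_i$ in the saturated family $\mathcal{F}$ has a vertex $v$ with
$\binom{n-1}{k-1} - \binom{n-1-k(m-1)}{k-1} < d_{F_i}(v) < \binom{n-1}{k-1}$. From $d_{F_i}(v) < \binom{n-1}{k-1}$ we know there is a $(k-1)$-set $A \subseteq [n]\setminus\{v\}$ with $A \cup \{v\} \notin F_i$. Set $e := A \cup \{v\}$. By saturation, the augmented family $\mathcal{F}(e,F_i)$ admits a rainbow matching; since $\mathcal{F}$ itself did not, this rainbow matching must use the new edge $e$ as the representative of $F_i$. Thus there are pairwise disjoint edges $e_j \in F_j$ for $j \in [m]\setminus\{i\}$, all disjoint from $e$, so in particular all avoiding $v$ and all avoiding $A$. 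Let $W := \{v\} \cup \bigcup_{j\neq i} e_j$, a set of $1 + k(m-1)$ vertices, and note $A \subseteq [n]\setminus W$ is still available.

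The key observation is then that $F_i$ can contain no edge through $v$ that is disjoint from $\bigcup_{j\neq i}e_j$: such an edge $e_i'$ together with the $e_j$ would be a rainbow matching for $\mathcal{F}$, a contradiction. Hence every edge of $F_i$ containing $v$ meets the $k(m-1)$ vertices of $\bigcup_{j \neq i} e_j$. Equivalently, writing $U := \bigcup_{j\neq i} e_j$, every $(k-1)$-set $B \in N_{F_i}(v)$ satisfies $B \cap U \neq \emptyset$. Counting the $(k-1)$-subsets of $[n]\setminus\{v\}$ that meet $U$: their number is $\binom{n-1}{k-1} - \binom{n-1-|U|}{k-1} = \binom{n-1}{k-1} - \binom{n-1-k(m-1)}{k-1}$. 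Therefore $d_{F_i}(v) \le \binom{n-1}{k-1} - \binom{n-1-k(m-1)}{k-1}$, contradicting our assumption that $d_{F_i}(v)$ strictly exceeds this quantity.

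I expect the only subtlety — and it is minor — is making sure the augmenting edge $e$ is genuinely not in $F_i$ (handled by $d_{F_i}(v) < \binom{n-1}{k-1}$) and that the rainbow matching produced by saturation really does use $e$ for the slot $F_i$ (it must, since dropping $e$ would give a rainbow matching for $\mathcal{F}$, which does not exist). After that the argument is a direct inclusion–exclusion count of neighborhoods, so there is no real obstacle; the statement is essentially a packaging of the saturation property into a degree bound. One should also double-check the edge case $m=1$, where $k(m-1)=0$ and the bound reads $d_{F_i}(v) \le 0$ or $d_{F_i}(v) = \binom{n-1}{k-1}$, which is consistent since with $m=1$ saturation forces $F_i = \binom{[n]}{k}$.
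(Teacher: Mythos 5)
Your proposal is correct and follows essentially the same argument as the paper: use saturation (with an augmenting edge $e \ni v$ not in $F_i$) to produce a rainbow matching $M$ for $\mathcal{F}\setminus\{F_i\}$ that avoids $v$, observe that every edge of $F_i$ through $v$ must meet $V(M)$ (else $M$ could be extended), and count the $(k-1)$-sets in $[n]\setminus\{v\}$ meeting the $k(m-1)$ vertices of $V(M)$. The only cosmetic difference is that you frame it as a direct contradiction while the paper shows the contrapositive, and your aside about the $m=1$ edge case is a harmless sanity check.
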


\pf
Suppose $d_{F_i}(v) <\binom{n-1}{k-1}$, where $v\in [n]$ and $i\in [m]$.
Then there exists $e\in {[n]\choose k}\setminus F_i $ such that $v\in e$.
Since $\mathcal{F}$ is saturated, the family ${\cal F}(e,F_i)$ admits
a rainbow matching, say $M\cup \{e\}$, with $M$ being a
rainbow matching for the family $\mathcal{F}\setminus \{F_i\}$.

If $d_{F_i}(v) > \binom{n-1}{k-1} -
\binom{n-1-k(m-1)}{k-1}=\left|\binom{[n]\backslash \{v\}}{k-1}
  \backslash \binom{[n]\backslash (\{v\}\cup V(M))}{k-1}\right|$, then
there exists an edge $f\in F_i$ such that $v\in f$ and  $f\cap
V(M)=\emptyset$. Now $M\cup \{f\}$ is a rainbow matching for
$\mathcal{F}$, a contradiction.
So $d_{F_i}(v)\le
\binom{n-1}{k-1} - \binom{n-1-k(m-1)}{k-1}$.
\qed

\medskip

We will remove vertices of degree $\binom{n-1}{k-1}$ and
use Lemma~\ref{claim:Delta} to produce saturated family $\mathcal{F}=
\{{F}_1, ... ,{F}_m\}$ of $k$-graphs such that  for each $v\in V(F_i)$ and each $i\in [m]$,   $d_{F_i}(v)\le
\binom{n-1}{k-1} - \binom{n-1-k(m-1)}{k-1}$.

Next we define stable hypergraphs.
Let $n,k$ be positive integers with $n\ge k$. Let $e=\{a_1,...,a_k\}$
and $f=\{b_1,...,b_k\}$ be members of $\binom{[n]}{k}$ with
$a_1<a_2<...<a_k$ and $b_1<b_2<...<b_k$.
We write $e\le f$ if $a_i\le b_i$ for all $1\le i \le k$, and $e<f$ if $e\le f$ and $e\ne f$.

A $k$-graph $F\subseteq \binom{[n]}{k}$ is said to be {\bf stable} if $e<f\in F$
implies $e\in F$. A family ${\cal F}$ of $k$-graphs on the same vertex
set $[n]$ is {\bf stable} if each $k$-graph in ${\cal F}$ is stable.

The following result of Huang, Loh, and Sudakov \cite{HLS12} will be
used frequently, which enables us
to work with stable families when proving Conjecture~\ref{RMConj}.

\begin{lem}[\cite{HLS12}, Lemma 2.1]\label{lem:stable}
Let $n,k,m$ be positive integers with $n\ge km$.
If the family $\{F_1, ..., F_m\}$ of $k$-graphs with $V(F_i) = [n]$ for all $i\in [m]$ does not admit a rainbow matching,
then there exists a stable family $\{F_1', ..., F_m'\}$ of $k$-graphs with $|F_i| = |F_i'|$ and $V(F_i') = [n]$ for all $i\in [m]$ which still preserves this property.
\end{lem}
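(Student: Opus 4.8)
The plan is to use the classical shifting (compression) technique, applied simultaneously to all $m$ members of the family. For $1\le i<j\le n$ and a $k$-graph $F\subseteq\binom{[n]}{k}$, let the $(i,j)$-shift $S_{ij}(F)$ be obtained by replacing each edge $e\in F$ with $j\in e$ and $i\notin e$ by $(e\setminus\{j\})\cup\{i\}$ whenever this set is not already in $F$, and leaving every other edge unchanged. This preserves $|F|$ and keeps $F$ inside $\binom{[n]}{k}$. For a family $\{F_1,\dots,F_m\}$ I would set $S_{ij}(\{F_1,\dots,F_m\}):=\{S_{ij}(F_1),\dots,S_{ij}(F_m)\}$, i.e.\ apply the \emph{same} $(i,j)$-shift to every member.

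The heart of the argument is the claim that if $\{S_{ij}(F_1),\dots,S_{ij}(F_m)\}$ admits a rainbow matching, then so does $\{F_1,\dots,F_m\}$. Suppose $\{e_1,\dots,e_m\}$ is a rainbow matching with $e_\ell\in S_{ij}(F_\ell)$. Call $e_\ell$ \emph{new} if $e_\ell\notin F_\ell$; then necessarily $i\in e_\ell$, $j\notin e_\ell$, and $f_\ell:=(e_\ell\setminus\{i\})\cup\{j\}\in F_\ell$. Since the $e_\ell$ are pairwise disjoint, at most one of them contains $i$, so at most one is new, and if none is new we are done. Otherwise let $e_\ell$ be the unique new edge, so every other $e_p$ lies in $F_p$. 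If no $e_p$ with $p\ne\ell$ contains $j$, then replacing $e_\ell$ by $f_\ell$ keeps the matching property and produces a rainbow matching of $\{F_1,\dots,F_m\}$. The remaining case is when some $e_p$ with $p\ne\ell$ contains $j$; here $i\notin e_p$, so from $e_p\in S_{ij}(F_p)$ we may conclude both $e_p\in F_p$ and $e_p':=(e_p\setminus\{j\})\cup\{i\}\in F_p$. One then checks that replacing $e_\ell$ by $f_\ell$ and $e_p$ by $e_p'$ simultaneously yields a rainbow matching of $\{F_1,\dots,F_m\}$: both $f_\ell$ and $e_p'$ are disjoint from every $e_q$ with $q\ne\ell,p$ because such $e_q$ avoids $i$ and $j$, and $f_\ell\cap e_p'=\emptyset$ follows from $e_\ell\cap e_p=\emptyset$ together with $i\ne j$.

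Finally I would iterate. Starting from a family with no rainbow matching, repeatedly apply $S_{ij}$ over all pairs $i<j$; by the claim the ``no rainbow matching'' property is preserved, and so are all the $|F_\ell|$ and the common vertex set $[n]$. Using the potential $\Phi(\{F_1,\dots,F_m\})=\sum_{\ell=1}^{m}\sum_{e\in F_\ell}\sum_{v\in e}v$, which strictly decreases whenever some $S_{ij}$ acts non-trivially and is a positive integer bounded below, the process terminates at a family fixed by every $S_{ij}$. A standard property of elementary shifts (any $k$-set is reachable from a coordinatewise-larger $k$-set by a sequence of such shifts) then shows that a $k$-graph fixed by all $S_{ij}$ is precisely a stable $k$-graph in the sense defined above, so the resulting family is stable, as required. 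I expect the main obstacle to be the case analysis in the claim, in particular the simultaneous double replacement of $e_\ell$ and $e_p$ — which is exactly the reason the same shift must be applied to all members of the family at once.
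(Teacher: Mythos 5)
Your proof is correct and follows the same shifting (compression) route as the cited proof in Huang--Loh--Sudakov. The key case analysis — at most one $e_\ell$ is new because only one can contain $i$, then split on whether some other $e_p$ contains $j$, using that such $e_p$ being fixed by the shift forces both $e_p$ and its shifted copy $e_p'$ to lie in $F_p$, and finally the simultaneous double swap — is precisely the standard argument for showing that compression preserves the absence of a rainbow matching, and the termination via the potential $\Phi$ plus the identification of shift-invariant families with stable ones is the standard conclusion.
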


\begin{cor} \label{coro:stable-saturated}
Let $n,k,m$ be positive integers with $n\ge km$.
Let ${\cal F}=\{F_1, \ldots, F_m\}$ be a family of $k$-graphs on the vertex set $[n]$ that
does not admit a rainbow matching. Then there exists a family ${\cal
  F}'=\{F_1', \ldots, F_m'\}$ of $k$-graphs on the same vertex set $[n]$ such that ${\cal F}'$ is both stable and
saturated and $|F_i'|\ge
|F_i|$ for $i\in [m]$.
\end{cor}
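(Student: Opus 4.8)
The plan is to obtain $\mathcal{F}'$ from $\mathcal{F}$ in two stages: first saturate, then stabilize, while checking that neither operation destroys the property gained from the other. First I would iteratively add edges: as long as the current family admits no rainbow matching but is not yet saturated, there is some $F\in\mathcal{F}$ and some $e\notin F$ with $|e|=k$ such that $\mathcal{F}(e,F)$ still admits no rainbow matching; replace $\mathcal{F}$ by $\mathcal{F}(e,F)$. Each step increases $\sum_{i\in[m]}|F_i|$ by one, and this sum is bounded above by $m\binom{n}{k}$, so the process terminates in a family that is saturated, admits no rainbow matching, and has $|F_i'|\ge|F_i|$ for all $i$.

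Next I would apply Lemma~\ref{lem:stable} to this saturated family to obtain a stable family $\{F_1',\ldots,F_m'\}$ with $|F_i'|=|F_i|$ for all $i$ (so still $|F_i'|\ge|F_i|$ relative to the original $\mathcal{F}$) and which still admits no rainbow matching. The one remaining point is that the stabilizing operation of Lemma~\ref{lem:stable} might in principle break saturation; to handle this cleanly I would simply repeat the whole cycle: saturate the stable family again by adding edges one at a time. The key observation that makes this terminate is that the shifting/compression operation used to prove Lemma~\ref{lem:stable} never decreases any $|F_i|$ and, more importantly, that $\sum_i |F_i|$ is monotone non-decreasing across both types of steps and bounded by $m\binom{n}{k}$; hence after finitely many alternations we reach a family that is a fixed point of both operations, i.e.\ simultaneously stable and saturated, with $|F_i'|\ge|F_i|$ throughout.

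Alternatively, and more simply, one avoids the alternation entirely by observing that adding an edge $e$ to a stable family and then taking the (unique, coordinatewise) shift need not undo stability if $e$ is chosen to be $\le$-minimal among the edges whose addition preserves ``no rainbow matching''; more precisely, among all families obtainable from the stable family by adding edges while keeping ``no rainbow matching,'' pick one maximizing $\sum_i|F_i|$ and then, among those, lexicographically minimal in a suitable sense, and argue directly that it is both stable and saturated. I expect the main (really the only) obstacle to be this interaction between the two operations: verifying that saturation survives stabilization, or equivalently that the alternating process terminates. Everything else is a routine termination-by-monotone-bounded-quantity argument.
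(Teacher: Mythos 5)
Your first (iterative) approach is correct and terminates for the reason you give, but the paper reaches the conclusion in one step rather than by alternation. The paper chooses $\mathcal{F}^*$ to be any family of $k$-graphs on $[n]$ with no rainbow matching and $|F_i^*|\ge|F_i|$ for all $i$ that \emph{maximizes} $\sum_i|F_i^*|$; such a family is automatically saturated. It then applies Lemma~\ref{lem:stable} to $\mathcal{F}^*$ to get a stable $\mathcal{F}'$ with $|F_i'|=|F_i^*|$ for all $i$, still with no rainbow matching. Since $\mathcal{F}'$ satisfies the same constraints as $\mathcal{F}^*$ and has the same total size, it is again a maximizer, hence again saturated. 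So the crucial observation you flag -- that shifting preserves each $|F_i|$ and therefore preserves $\sum_i|F_i|$ -- is indeed the heart of the matter, but the paper uses it to show that a max-sum family \emph{stays} a max-sum family after shifting, rather than to drive a terminating alternation. Your iteration buys nothing that the one-shot maximality argument doesn't, and it requires you to argue (correctly, but more verbosely) that the process reaches a ``fixed point.'' Your proposed ``alternative'' at the end, involving lexicographic minimality and $\le$-minimal added edges, is over-engineered and on shaky ground: adding a single $\le$-minimal edge to a stable family need not preserve stability, and it is not needed -- the paper's argument does not try to add edges to a stable family at all, it stabilizes \emph{after} maximizing, which sidesteps the interaction you were worried about.
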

\pf  Let  ${\cal  F}^*=\{F_1^*, \ldots, F_m^*\}$ be a family of
$k$-graphs on the same vertex set $[n]$ such that ${\cal F}^*$ admits no rainbow matching,  $|F_i^*|\ge
|F_i|$ for $i\in [m]$, and, subject to these, $\sum_{i\in [m]}|F_i^*|$
is maximum.

Then ${\cal  F}^*$ is saturated. Now apply
Lemma~\ref{lem:stable} to ${\cal F}^*$ we obtain a stable family ${\cal F}'
=\{F_1', \ldots, F_m'\}$ of $k$-graphs on the vertex set $[n]$ such that ${\cal F}'$ admits no rainbow
matching,  and $|F_i'|=|F_i^*|$ for $i\in [m]$. By the choice of
${\cal F}^*$, we see that ${\cal F}'$ is also saturated. \qed

\medskip

We now describe an operation that converts a rainbow matching problem to
a matching problem on a single hypergraph. Let $n, k, m, r$ be
non-negative integers, with $r=\lfloor n/k\rfloor-m$ and $m\ge 1$.
Let ${\cal F}=\{F_1, \ldots, F_m\}$ be a family of $k$-graphs on
the same vertex set $[n]$, and
let $\mathcal{V}= \{v_1,...,v_m\}$ and $\mathcal{U}= \{u_1,...,u_r\}$ be two disjoint sets such that $(\mathcal{V}\cup \mathcal{U})\cap [n]=\emptyset$.
We use $H(\mathcal{F})$ to denote the $(k+1)$-graph with  vertex set
$[n]\cup \mathcal{V}$ and  edge set $\bigcup_{i=1}^{m} \{e\cup\{v_i\}\
:\ e\in F_i\}$, and use $H^*(\mathcal{F})$ to denote the $(k+1)$-graph
with the vertex set $[n]\cup \mathcal{V}\cup \mathcal{U}$ and the edge
set $E(H(\mathcal{F})) \cup \bigcup_{i=1}^{r}\{e\cup \{u_i\}\ :\ e\in
\binom{[n]}{k}\}$. If $F_1 = ... = F_m = S(n,m,k)$ (respectively, $F_1 = ... = F_m =D(n,m,k)$), then we write $H(\mathcal{F})$ as $H_S(n,m,k)$ (respectively, $H_D(n,m,k)$).

It is easy to see that $\mathcal{F}$ admits a rainbow matching
if and only if $H(\mathcal{F})$ has a matching of size $m$,
which is also if and only if $H^*(\mathcal{F})$ has a matching of size $m+r$.
This allows us to access existing approaches
and tools invented for matching problems. For instance, we
take the  approach by considering  whether
or not the hypergraphs $H(\mathcal{F})$ in question are close to the extremal
configurations $H_S(n,m,k)$ and $H_D(n,m,k)$. We will see in Section~\ref{Sec:HD} that if $H({\cal F})$ is close to $H_D(n,m,k)$
and ${\cal F}$ is stable, then
${\cal F}$ admits a rainbow matching.

Here we give an easy lemma concerning a case when  $H({\cal F})$ is
not close to $H_S(n,m,k)$, which will be used along with Lemma~\ref
{claim:Delta}. Let $H_1$ and $H_2$ be two $k$-graphs on the same vertex set $V$ and let $\epsilon$ be some positive real;
we say that $H_2$ is {\bf $\epsilon$-close} to $H_1$ if $|E(H_1) \setminus E(H_2)|\le \epsilon|V|^k$.

\begin{lem}\label{lemma:H_S(n,m)}
For any given integer $k\geq 3$, let $\epsilon, c$ be reals such that $0<\epsilon \ll c\ll 1$.\footnote{Here and throughout the rest of the paper, the notation $a\ll b$ means that $a$ is sufficiently small compared with $b$ which need to satisfy finitely many inequalities in the proof.}
Let $n, m$ be integers such that $n/3k^2\le m\le(1-c)n/k$.
Let $\mathcal{F}=\{F_1,...,F_m \}$ be a family of $k$-graphs on vertex set $[n]$.
If for every $i\in [m]$ and $v\in [n]$, $d_{F_i}(v)\le \binom{n-1}{k-1} - \binom{n-k(m-1)-1}{k-1}$,
then $H(\mathcal{F})$ is not $\epsilon$-close to $H_S(n,m,k)$.
\end{lem}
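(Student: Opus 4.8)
The plan is to bound the number of edges of $H_S(n,m,k)$ that $H(\mathcal{F})$ can contain, and show it falls short of $|E(H_S(n,m,k))| - \epsilon|V|^{k}$. Recall $H_S(n,m,k) = H(\mathcal{G})$ where $G_1 = \cdots = G_m = S(n,m,k)$, and that $S(n,m,k) = \binom{[n]}{k}\setminus\binom{[n]\setminus[m-1]}{k}$, i.e. $S(n,m,k)$ consists exactly of the $k$-sets meeting $[m-1]$. An edge of $H(\mathcal{G})$ has the form $e\cup\{v_i\}$ with $e\in S(n,m,k)$, so if $e\cup\{v_i\}\in E(H(\mathcal{F}))\cap E(H_S(n,m,k))$ then $e\in F_i$ and $e$ meets $[m-1]$. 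First I would write
\[
|E(H_S(n,m,k))\setminus E(H(\mathcal{F}))| \ \ge\ \sum_{i=1}^{m}\Big|\,\{e\in S(n,m,k): e\notin F_i\}\,\Big|,
\]
and the task reduces to showing the right-hand side exceeds $\epsilon(n+m)^{k}$ using only the degree hypothesis on each $F_i$.

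Next I would fix one $F_i$ and estimate $|\{e\in S(n,m,k): e\in F_i\}|$ from above. Every edge of $S(n,m,k)$ contains a vertex of $[m-1]$, so by the union bound this count is at most $\sum_{v\in[m-1]} d_{F_i}(v) \le (m-1)\big(\binom{n-1}{k-1} - \binom{n-k(m-1)-1}{k-1}\big)$, invoking the degree hypothesis. On the other hand $|S(n,m,k)| = \binom{n}{k} - \binom{n-m+1}{k}$. So
\[
|\{e\in S(n,m,k): e\notin F_i\}| \ \ge\ \binom{n}{k} - \binom{n-m+1}{k} - (m-1)\left(\binom{n-1}{k-1} - \binom{n-k(m-1)-1}{k-1}\right).
\]
The point is that this quantity is a positive constant fraction of $n^{k}$ in the regime $n/(3k^{2}) \le m \le (1-c)n/k$: the term $\binom{n}{k}-\binom{n-m+1}{k}$ is of order $mn^{k-1}\approx$ a constant times $n^k$, while $(m-1)\binom{n-1}{k-1}$ is of order $mn^{k-1}$ too, and the crucial cancellation is that $(m-1)\binom{n-k(m-1)-1}{k-1}$ is a positive constant fraction of $n^k$ when $m = \Theta(n)$ and $n - k(m-1) = \Theta(n)$ (which holds because $m \le (1-c)n/k$ forces $n - k(m-1) \ge cn - O(1)$, and $m \ge n/(3k^2)$ forces $m = \Omega(n)$). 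I would make this precise by approximating $\binom{n}{k}-\binom{n-m+1}{k} \approx \frac{n^k}{k!}(1-(1-\alpha)^k)$ and $(m-1)\binom{n-1}{k-1}\approx \frac{\alpha n^k}{(k-1)!}$ and $(m-1)\binom{n-k(m-1)-1}{k-1}\approx \frac{\alpha n^k}{(k-1)!}(1-k\alpha)^{k-1}$, writing $m\approx\alpha n$ with $c/k \le \alpha$ bounded away from $1/k$ suitably — actually it is cleaner to set $\beta=km/n\in[1/(3k),1-c]$ and show the whole expression is at least $\delta(\beta,k)\,n^k$ for an explicit $\delta>0$ depending only on $k$ and the range of $\beta$, using that the function is continuous and positive on the compact interval $[1/(3k),1-c]$; positivity at the endpoints and interior follows from the strict inequality $D(n,m,k)$ versus $S(n,m,k)$ not being tight here, or simply from a direct derivative/convexity estimate.

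Finally, summing over $i\in[m]$ multiplies by $m \ge n/(3k^2)$, so $|E(H_S(n,m,k))\setminus E(H(\mathcal{F}))| \ge m\cdot\delta n^k = \Omega(n^{k+1}) \gg \epsilon(n+m)^{k}$ once $\epsilon$ is small enough (indeed even one value of $i$ already gives $\delta n^k$, which beats $\epsilon(n+m)^k \le \epsilon(2n)^k$ for $\epsilon \ll_k \delta$); hence $H(\mathcal{F})$ is not $\epsilon$-close to $H_S(n,m,k)$. The main obstacle I anticipate is the elementary-but-fiddly verification that the binomial expression
\[
\binom{n}{k} - \binom{n-m+1}{k} - (m-1)\binom{n-1}{k-1} + (m-1)\binom{n-k(m-1)-1}{k-1}
\]
is bounded below by a positive constant multiple of $n^k$ uniformly for $m$ in the stated window; care is needed near $m = (1-c)n/k$, where $n-k(m-1)$ shrinks to order $cn$ but does not vanish, so the last term remains a genuine positive fraction of $n^k$ — this is exactly where the hypothesis $m \le (1-c)n/k$ (rather than $m \le n/k$) is used, and where a slightly sharper estimate than crude bounding is required.
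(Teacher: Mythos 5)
Your reduction is fine and your degree-hypothesis step is correct, but the quantitative claim at the heart of the argument is false, and the approach does not recover.

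Concretely, you lower-bound the number of missed edges of $S(n,m,k)$ per colour by
\[
\binom{n}{k} - \binom{n-m+1}{k} - (m-1)\left[\binom{n-1}{k-1} - \binom{n-k(m-1)-1}{k-1}\right],
\]
via the union bound $|\{e\in S(n,m,k):e\in F_i\}|\le\sum_{v\in[m-1]}d_{F_i}(v)$. The union bound is valid, but it is far too lossy precisely when $m$ is a sizeable fraction of $n/k$: then most edges of $S(n,m,k)$ meet $[m-1]$ in several vertices, and $(m-1)\binom{n-1}{k-1}$ overshoots $|S(n,m,k)|$ by a constant fraction of $n^k$. Writing $m=\alpha n$ and normalizing, your expression is asymptotically $\frac{n^k}{k!}\,g(\alpha)$ where
\[
g(\alpha)=1-(1-\alpha)^k-k\alpha+k\alpha(1-k\alpha)^{k-1}.
\]
For $k=3$ and $\alpha=1/4$ (well inside the permitted window $[1/27,(1-c)/3]$), one gets $g(1/4)=1-\tfrac{27}{64}-\tfrac34+\tfrac{3}{64}=-\tfrac18<0$. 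So your ``lower bound'' is a negative multiple of $n^k$ on a nonempty subrange of $m$, hence useless there; the claimed positivity ``on the compact interval'' simply fails, and no choice of $\epsilon\ll c$ rescues this.

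The fix is to count missing edges directly rather than by subtraction. For each $v\in[m-1]$, $S(n,m,k)$ contains \emph{all} $\binom{n-1}{k-1}$ $k$-sets through $v$, while $d_{F_i}(v)\le\binom{n-1}{k-1}-\binom{n-k(m-1)-1}{k-1}$; so at least $\binom{n-k(m-1)-1}{k-1}$ edges of $S(n,m,k)$ through $v$ are absent from $F_i$. Summing over $v\in[m-1]$ and dividing by $k$ (each missing edge has at most $k$ vertices in $[m-1]$, so is counted at most $k$ times) gives a lower bound of $(m-1)\binom{n-k(m-1)-1}{k-1}/k$ missing edges per colour — a manifestly nonnegative quantity, of order $n^k$ throughout the window since $n-k(m-1)\ge cn$. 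Multiplying by $m$ colours gives the desired $\Omega(n^{k+1})\gg\epsilon(n+m)^{k+1}$ bound (note also that the relevant exponent is $k+1$, the uniformity of $H(\mathcal{F})$, not $k$). This is exactly the paper's argument; the key difference from yours is that it lower-bounds missing incidences per vertex and then deduplicates, rather than upper-bounding shared edges via a lossy union bound and subtracting.
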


\begin{proof}
We note that $S(n,m,k)$ has $m-1$ vertices of degree $\binom{n-1}{k-1}$.
Since for every $i\in [m]$ and $v\in [n]$, $d_{F_i}(v)\le  \binom{n-1}{k-1} - \binom{n-k(m-1)-1}{k-1}$, we have
\begin{align*}
|E(H_S(n,m,k))\setminus E(H(\mathcal{F}))|\geq m\cdot (m-1) \cdot \binom{n-k(m-1)-1}{k-1}\cdot \frac1k > \frac{n^2}{10k^5} \binom{cn}{k-1} > \epsilon (n+m)^{k+1},
\end{align*}
where the second inequality is due to  $n/3k^2\le m\le(1-c)n/k$ and the third inequality follows from $\epsilon \ll c$.
This shows that $H(\mathcal{F})$ is not $\epsilon$-close to $H_S(n,m,k)$.
\end{proof}

To deal with the case when $H({\cal F})$ is not close to $H_D(n,m,3)$,
we first find a small matching $M$ in $H^*({\cal F})$ such that $M$ can
``absorb'' small vertex sets and $H^*({\cal F})-V(M)$ has an almost
perfect matching. When ${\cal F}$ is stable, the matching $M$ can
be found very easily by the following lemma and its
proof.

\begin{lem}\label{absorb}
Let $k$ be a fixed positive integer and let $0<\gamma'\ll\gamma \ll c \ll 1$ be reals.
Let $n, m$ be positive integers with $n/3k^2\le m\le (1-c)n/k$. Let $\mathcal{F}= \{{F}_1, ... ,{F}_m\}$ be a stable family of $k$-graphs such that $V(F_i)=[n]$ and $|{F}_i|> f(n,m,k)$ for all $i\in [m]$.
Then for sufficiently large $n$, $H^*(\mathcal{F})$ has a matching $M$ with $|M| \le \gamma n$ such that for any set $S\subseteq V(H^*(\mathcal{F}))\setminus V(M)$ with $|S|\le \gamma' n$ and $k|S\cap (\mathcal{V}\cup \mathcal{U})| = |S\cap [n]|$, $H^*(\mathcal{F})[V(M)\cup S]$ has a perfect matching.
\end{lem}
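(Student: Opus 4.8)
The plan is to build the absorbing matching $M$ greedily, one ``absorbing gadget'' at a time, using the stability of $\mathcal{F}$ to guarantee that the gadgets we need actually exist as edges. Recall that a matching of size $m+r$ in $H^*(\mathcal{F})$ corresponds to a perfect matching (every vertex of $[n]\cup\mathcal{V}\cup\mathcal{U}$ is covered, since $k(m+r)+\,(m+r)=k\lfloor n/k\rfloor + (m+r)$ accounts for all of $\mathcal{V}\cup\mathcal{U}$ and all but $n-k\lfloor n/k\rfloor<k$ vertices of $[n]$); more to the point, the only ``constraint vertices'' are those in $\mathcal{V}$, each of which lies in exactly the edges coming from one family $F_i$, while the vertices in $\mathcal{U}$ lie in edges $e\cup\{u_j\}$ for every $e\in\binom{[n]}{k}$ and are therefore extremely flexible. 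Because $\mathcal{F}$ is stable, each $F_i$ contains the ``downward-closed'' set of $k$-sets; in particular, since $|F_i|>f(n,m,k)\ge\binom{km-1}{k}$, every $F_i$ contains \emph{all} $k$-subsets of $[k(m-1)+k]=[km]$ that meet a suitably small initial segment, and in fact $F_i$ contains every $k$-set $e$ with $\min(e)$ small. This is the key leverage: for a target set $S$ of size $\le\gamma' n$ with $k|S\cap(\mathcal{V}\cup\mathcal{U})|=|S\cap[n]|$, we will want to cover $S$ together with $V(M)$, and stability lets us route the $[n]$-part of $S$ through edges of the relevant $F_i$'s (using vertices of small label) or, failing that, through the all-flexible $\mathcal{U}$-edges.

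Concretely, here are the steps. \textbf{Step 1 (setup).} Reserve a set $W\subseteq[n]$ of $\Theta(\gamma n)$ vertices with the \emph{smallest} labels — say $W=[t]$ with $t=10k\gamma n$; stability ensures that for every $i\in[m]$, every $k$-set $e\subseteq[n]$ with $e\cap W\ne\emptyset$ and $e$ small in the $\le$-order lies in $F_i$. Also note $m\le(1-c)n/k$ forces $r=\lfloor n/k\rfloor-m\ge cn/2k$, so we have linearly many ``$u$-slots.'' \textbf{Step 2 (one absorber).} For each index $i\in[m]$ (the interesting constraint vertices are the $v_i$), and for a handful of possible ``local patterns'' of how $S$ can interact with $\mathcal{V}$, build a small constant-size matching $A_{i}$ inside $H^*(\mathcal{F})$, all of whose $[n]$-vertices lie in $W$, with the property that $A_i$ can be ``rewired'': there is a second matching $A_i'$ on the same vertex set $V(A_i)$ \emph{plus} the target vertices, using one edge through $v_i$ of the form $\{v_i\}\cup e$ with $e$ small (hence $e\in F_i$ by stability), so that replacing $A_i$ by $A_i'$ absorbs the intended piece of $S$. \textbf{Step 3 (assemble $M$).} Greedily pick pairwise disjoint copies of these absorbers, enough of each type, so that the total is $|M|\le\gamma n$ but at least, say, $2\gamma' n/k$ of each type — possible since each absorber has bounded size and we have $\Theta(\gamma n)$ room in $W$ and $\Theta(cn/k)$ room in $\mathcal{U}$, while $\gamma'\ll\gamma$. \textbf{Step 4 (absorption).} Given $S$ with $|S|\le\gamma' n$ and $k|S\cap(\mathcal{V}\cup\mathcal{U})|=|S\cap[n]|$: partition $S$ according to which $v_i$'s and $u_j$'s it contains; for each $v_i\in S$ use a fresh $A_i$-absorber to swallow $v_i$ together with its allotted $k$ vertices of $S\cap[n]$ via an edge $\{v_i\}\cup e$, routing, if necessary, through one small vertex of $W$ and compensating by dumping a displaced vertex of $W$ into a $u$-slot edge (legal because $u$-edges accept every $k$-set); for each $u_j\in S$ directly take the edge $\{u_j\}\cup(\text{its }k\text{ vertices of }S\cap[n]$, padded from $W)$; the remaining vertices of $S\cap[n]$ get paired off in threes (groups of $k$) into further $u$-slot edges; finally the untouched absorber-vertices of $V(M)$ are covered by their ``default'' matchings $A_i$. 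Checking that the counts balance — every reserved vertex and every vertex of $S$ ends up in exactly one edge — is the bookkeeping, and it works precisely because of the divisibility hypothesis $k|S\cap(\mathcal{V}\cup\mathcal{U})|=|S\cap[n]|$ together with the surplus of $u$-slots.

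The main obstacle I expect is \textbf{Step 2}: designing absorbers that simultaneously (i) have their $[n]$-footprint confined to the small-label set $W$ so that stability guarantees all the needed edges belong to the right $F_i$, (ii) are flexible enough to absorb a $v_i$ along with an arbitrary $k$-subset of $S\cap[n]$ (whose labels are \emph{not} small), and (iii) do not create any parity or counting obstruction. Point (ii) is the crux: the target vertices of $S\cap[n]$ can have large labels, so we cannot directly put them into an edge of $F_i$ unless that edge is ``below'' some member of $F_i$ — but a $k$-set containing only large labels need not be in $F_i$. The resolution is exactly the two-step rerouting in Step 4: we never ask $F_i$ to accept a large-label $k$-set; instead the large-label vertices of $S$ go into $\mathcal{U}$-edges (which accept everything), and the \emph{small} vertices of $W$ that those $\mathcal{U}$-edges displace are what get fed, in small-label $k$-sets, into the edges through $v_i$. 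So the absorber is really a $W$-to-$\mathcal{U}$ ``token-passing'' device, and verifying that one can always pass the right number of tokens — i.e. that the system of absorbers we pre-install has enough total $\mathcal{U}$-capacity and $W$-capacity to handle the worst-case distribution of $S$ — is the part that needs care. Everything else reduces to the two facts already available to us: stability gives the requisite edges near the bottom of the order, and $m\le(1-c)n/k$ gives a linear surplus $r$ of all-flexible $u$-vertices.
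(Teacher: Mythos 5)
Your proposal identifies the right two ingredients: the universal flexibility of the $\mathcal{U}$-vertices, and the complete $k$-graph on a small initial segment that stability forces in every $F_i$ (since $|F_i|>f(n,m,k)\ge f(n,s,k)$ with $s=\lceil n/3k^2\rceil-1$ gives a matching of size $s$ in $F_i$, whence stability makes $F_i[[s]]$ complete). Your Step-4 ``token-passing'' fix for the large-label vertices of $S\cap[n]$ is also the right mechanism. But the route you propose is considerably more elaborate than the paper's, and the extra machinery is not needed. The paper dispenses with per-index absorber gadgets entirely: it takes $M$ to be a single reservoir of $t$ edges $\{u_j\}\cup e_j$ with $V(M)=\{u_1,\dots,u_t\}\cup[kt]$ (where $\gamma' n<t<\gamma n$), and given $S$ it simply \emph{discards $M$ and re-matches $V(M)\cup S$ from scratch}. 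Set $t'=|S\cap(\mathcal{V}\cup\mathcal{U})|$; then $M_1$ pairs each vertex of $S\cap(\mathcal{V}\cup\mathcal{U})$ with a $k$-tuple from $[kt']$ (legal for $v_j$'s because $kt'<kt<s$ and $F_j[[s]]$ is complete, legal for $u_j$'s trivially), and $M_2$ pairs $u_1,\dots,u_t$ with the $kt$ vertices of $([kt]\setminus[kt'])\cup(S\cap[n])$, the count matching exactly because of the hypothesis $|S\cap[n]|=k|S\cap(\mathcal{V}\cup\mathcal{U})|=kt'$. Two things you appear to have missed, both of which let the paper avoid your Step-2/Step-3 bookkeeping: first, the same small-label block serves \emph{every} $v_j$ (you don't need one gadget per index $i$, since all $F_i$'s contain the complete $k$-graph on $[s]$); second, there is no need for local rewiring or greedy gadget selection, because throwing away all of $M$ and rebuilding globally is permitted and resolves, in one step, the ``counting the tokens'' worry you flag as the remaining obstacle. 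So your approach is correct in spirit and would likely work, but the paper's monolithic re-matching is the simpler and already-complete version of it.
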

\begin{proof}
Recall that $\mathcal{V}= \{v_1,...,v_m\}$ and $\mathcal{U}= \{u_1,...,u_r\}$, where $r = \lfloor  n/k\rfloor-m$.
Fix an integer $t$ satisfying $\gamma' n  < t < \gamma n $. Then  $t
<\gamma n\leq \lfloor cn/k\rfloor\leq \lfloor n/k\rfloor-m=r$.
Let $s=\lceil n/3k^2\rceil-1$.

By Theorem~\ref{n>k^2} (viewing all $k$-graphs as the same $k$-graph),
since $|F_i|> f(n,m,k)\geq f(n,s,k)$ for all $i\in [m]$, every $F_i$ has a matching of size $s$.
Since $F_i$ is stable, $F_i[[s]]$ is a complete $k$-graph. Hence,
\begin{itemize}
\item[(i)] for any $i_1,i_2,..., i_k \le kt \le k\gamma n < s$ and
  $j\in [m]$, we have $\{v_j,i_1,i_2,...,i_k\} \in H^*(\mathcal{F})$.
\end{itemize}
From the definition of $H^*({\cal F})$, we have
\begin{itemize}
\item[(ii)] for any $i_1,i_2,...,i_k \in [n]$ and $j\in [r]$, $\{u_j,i_1,i_2,...,i_k\} \in H^*(\mathcal{F})$.
\end{itemize}	

Since $t<r$, we may choose a matching $M$ of size $t$ in $H^*(\mathcal{F})$ with $V(M)= \{u_1,..., u_t\}\cup [kt]$.
Note that $|M|=t\le \gamma n$.
We claim that this $M$ is the desired matching.
To see this, consider any subset $S$ with $S\cap V(M)=\emptyset$, $|S|\le \gamma' n$, and $k|S\cap (\mathcal{V}\cup \mathcal{U})| = |S\cap [n]|$.
Let $t' = |S\cap (\mathcal{V}\cup \mathcal{U})|$. So $t'\le \gamma'n < t$.
Then by (i) and (ii), there is a perfect matching $M_1$ in $H^*(\mathcal{F})[S\cap (\mathcal{V}\cup \mathcal{U})) \cup [kt'] ]$ .
By (ii), there exists a perfect matching $M_2$ in $H^*(\mathcal{F})[(V(M)\cup S)\setminus V(M_1)]$.
So $M_1\cup M_2$ is a perfect matching in $H^*(\mathcal{F})[V(M)\cup S]$.
\end{proof}

For the ``absorbing'' matching $M$ in $H^*({\cal F})$ in
Lemma~\ref{absorb}, we also want  $H^*({\cal F})-V(M)$ to have an almost
perfect matching. For this we need to use the following result of Frankl and R\"odl \cite{FR85}.
	
\begin{thm}[\cite{FR85}]\label{regular}
For every integer $k\ge 2$ and any real $\sigma >0$, there exist $\tau = \tau(k,\sigma)$ and $d_0 = d_0(k,\sigma)$ such that for every integer $n\ge D\ge d_0$ the following holds:
Every $n$-vertex $k$-graph $H$ with $(1-\tau)D<\Delta_1(H)< (1+\tau)D$ and $\Delta_2(H)<\tau D$ contains a matching covering all but at most $\sigma n$ vertices.
\end{thm}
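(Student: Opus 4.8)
The statement is a classical result, so here I only sketch the standard proof via the \emph{semi-random (nibble) method}, which is how one would reprove it today. The plan is to build the matching in many small rounds, at each round selecting a tiny random subset of the current edge set, deleting conflicts, and showing that the surviving hypergraph remains nearly regular with a uniformly smaller maximum codegree, so that the process can be iterated until only $\sigma n$ vertices are uncovered. First I would fix a small parameter $\delta$ (with $\tau \ll \delta \ll \sigma$) and, in a single round on a $k$-graph $H'$ that is $(1\pm\tau')D'$-regular with $\Delta_2(H')<\tau' D'$, activate each edge independently with probability $p=\delta/D'$, then discard any activated edge that meets another activated edge. A standard first/second-moment computation shows that each vertex is covered with probability $(1\pm o(1))\,k\delta e^{-(k-1)\delta}$ roughly, hence after the round a $(1\pm o(1))$-fraction $1-\theta$ of vertices survives, where $\theta = \theta(\delta)>0$; the key point is that the surviving sub-hypergraph $H''$ (restricted to uncovered vertices, with covered ones and their edges removed) is again approximately regular with new degree $D'' \approx D'(1-\theta)^{k-1} e^{-k\delta}$ — call this factor $\rho<1$ — and with $\Delta_2(H'')$ still bounded by $\tau' D''$ up to lower-order corrections.

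The key steps, in order, are: (1) set up the parameters $\tau,d_0$ in terms of $k$ and $\sigma$, choosing a number of rounds $R = R(\sigma,\delta)$ large enough that $(1-\theta)^R < \sigma/2$; (2) prove the \emph{one-round lemma}: conditioned on $H'$ being $(\epsilon_i)$-pseudorandom (near-regular with small codegree), after one nibble the leftover hypergraph on the uncovered vertices is, with probability $1-o(1)$, $(\epsilon_{i+1})$-pseudorandom for a controlled sequence $\epsilon_i$ and has vertex set of size $(1\pm\epsilon_i')(1-\theta)^i n$ — here concentration comes from Azuma/Talagrand applied to the edge-exposure martingale, using that changing one edge's activation status alters the relevant counts by at most $O(\Delta_2 \cdot \text{something})$; (3) iterate the one-round lemma $R$ times, keeping track of the accumulating errors and insisting (via the smallness of $\tau$ and the lower bound $D\ge d_0$) that they never blow past the $(1\pm\tau)$ windows we need; (4) after $R$ rounds at most $\sigma n/2$ vertices remain uncovered, and the matching assembled from all rounds' selected edges is the desired one.

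The main obstacle — and the technical heart of the Frankl--R\"odl argument — is step (2): controlling the \emph{codegree} $\Delta_2$ through the rounds. Vertex degrees are easy to track in expectation and concentrate well, but after deleting covered vertices the pair-degrees can in principle become unbalanced, and if $\Delta_2$ grows relative to $D$ the nibble breaks down in later rounds. The resolution is to carry a stronger inductive hypothesis than bare near-regularity: one tracks, for every pair of vertices, the number of edges through it, and shows that the deletion process contracts these quantities by essentially the same factor $\rho$ as the degrees, so $\Delta_2/D$ stays $o(1)$. This requires the concentration bounds to be applied not just to degrees but to these codegree counts simultaneously, and a union bound over the polynomially many vertices and pairs, which is why we need $D$ (hence the relevant expectations $p D = \delta$, $p^2$-type quantities, etc.) to be at least an absolute constant $d_0(k,\sigma)$ and $\tau$ to be chosen last. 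Once the one-round lemma is in hand with codegree control, the iteration and the final count are routine bookkeeping.

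(Alternatively, one may simply cite \cite{FR85} verbatim, as the excerpt does; the above is the proof one would supply if a self-contained argument were wanted. Since the paper invokes Theorem~\ref{regular} only as a black box from \cite{FR85}, no proof need be reproduced here.)
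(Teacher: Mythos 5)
The paper does not prove this statement; Theorem~\ref{regular} is invoked strictly as a black box cited from Frankl and R\"odl \cite{FR85}, and no proof appears in the text --- a point you yourself note at the end. So there is nothing in the paper to compare your argument against. That said, your sketch of the semi-random nibble is the standard route to this result and is consistent with the original argument: iterate a random ``bite'' that activates edges with probability $\delta/D'$, discard conflicting activations, and show by concentration (edge-exposure martingale with Lipschitz constant controlled by $\Delta_2$) that the residual hypergraph on uncovered vertices stays near-regular with proportionally small codegree, so the process can be run for a bounded number of rounds until fewer than $\sigma n$ vertices remain. One small arithmetic slip at the sketch level: the per-vertex coverage probability in a single round should come out roughly $\delta e^{-k\delta}$ (degree $\approx D'$ times activation probability $\delta/D'$ times isolation probability $\approx e^{-k\delta}$), not $k\delta e^{-(k-1)\delta}$; this does not affect the structure of the argument since all that matters is that the fraction covered is some $\theta(\delta)>0$ bounded below. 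You correctly identify that maintaining the ratio $\Delta_2/\Delta_1 = o(1)$ through the rounds is the technical crux, and your handling of it (track codegrees alongside degrees, union bound over vertices and pairs, take $d_0$ large and $\tau$ last) is the right bookkeeping. Since the paper uses the theorem only as a citation, your sketch is an acceptable supplement but is not required by the paper's logic.
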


In order to obtain a $k$-graph $H$ satisfying
Theorem~\ref{regular}, we use the approach from \cite{AFH12} by
conducting two rounds of randomization on $H^*(\mathcal{F})-V(M)$.
We summarize part of the proof in \cite{AFH12} (more precisely, the proof of Claim 4.1) as a lemma.
A \emph{fractional matching} in a $k$-graph $H$ is a function $w : E(H)\to [0, 1]$ such that for any
$v \in V(H)$, $\sum_{\{e\in E(H): v\in e\}} w(e) \le 1$.
A fractional matching is called \emph{perfect} if $\sum_{e\in E(H)} w(e) =|V(H)|/k$.

\begin{lem}[\cite{AFH12}, retained from the proof of Claim 4.1]\label{spaning graph}
Let $k\ge 3$ and $H$ be a $k$-graph on at most $2n$ vertices. 
Suppose that there are subsets $R^i\subseteq V(H)$ for $i=1,...,n^{1.1}$ satisfying the following:
\begin{itemize}
\item[(a).] every vertex $v\in V(H)$ satisfies that $|\{i: v\in R^i\}|=(1+o(1))n^{0.2}$,
\item[(b).] every pair $\{u,v\}\subseteq V(H)$ is contained in at most two sets $R^i$,
\item[(c).] every edge $e\in H$ is contained in at most one set $R^i$, and
\item[(d).] for every $i =1,...,n^{1.1}$, $R^i$ has a perfect fractional matching $ w^i$.
\end{itemize}
Then $H$ has a spanning subgraph $H'$ such that $d_{H'}(v) = (1+o(1))n^{0.2}$ for all $v\in V(H')$ and $\Delta_2(H')\le n^{0.1}$.
\end{lem}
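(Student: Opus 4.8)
The plan is to follow the probabilistic deletion argument of Alon–Frankl–Huang–Rödl–Rucinski–Sudakov \cite{AFH12}. We are given the family $\{R^i\}_{i=1}^{n^{1.1}}$ of subsets of $V(H)$ together with perfect fractional matchings $w^i$ on each $R^i$, and we want to build a single spanning subgraph $H'$ that is almost regular of degree $(1+o(1))n^{0.2}$ with $\Delta_2(H')\le n^{0.1}$. The idea is to pick, in each $R^i$, a small random collection of edges guided by $w^i$, throw all of these picked edges together into a candidate graph, and then show concentration so that after a small clean-up the degree and codegree conditions hold.

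First I would do the randomization inside each $R^i$ separately. Fix $i$ and let $\delta = \delta(n)$ be a tiny parameter (of order roughly $n^{0.2}/|R^i|$ up to constants, chosen so that the numbers below come out right); include each edge $e\subseteq R^i$ independently with probability $\delta w^i(e)$. Since $w^i$ is a \emph{perfect} fractional matching on $R^i$, for every $v\in R^i$ we have $\sum_{e\ni v} w^i(e)=1$, so the expected degree of $v$ contributed by round $i$ is exactly $\delta$. Because every vertex $v$ lies in $(1+o(1))n^{0.2}$ of the sets $R^i$ by (a), summing over all $i$ gives expected total degree $(1+o(1))\delta n^{0.2}$; calibrating $\delta$ so this equals $(1+o(1))n^{0.2}$ (up to the cosmetic normalization one wants) and applying a Chernoff bound together with a union bound over the at most $2n$ vertices shows that with high probability every vertex has degree $(1+o(1))n^{0.2}$ in the union $H''$ of all the picked edges. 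The independence across distinct $i$ is exactly what conditions (b) and (c) buy us: by (c) each edge of $H$ is available to be picked in at most one round, so an edge is never double-counted, and by (b) each pair of vertices sits in at most two of the $R^i$, so the codegree of a pair accumulates contributions from at most two independent rounds, each of which is a sum of independent indicators with small mean; another Chernoff-plus-union-bound estimate over the at most $\binom{2n}{2}$ pairs shows $\Delta_2(H'')\le n^{0.1}$ (in fact with room to spare) with high probability.

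Combining the two high-probability events, there is a choice of picked edges giving a subgraph $H''$ of $H$ with $d_{H''}(v)=(1+o(1))n^{0.2}$ for all $v$ and $\Delta_2(H'')\le n^{0.1}$. To get a \emph{spanning} subgraph $H'$ — i.e.\ one with vertex set all of $V(H)$ — note that $H''$ already has the right degree at every vertex of $V(H)$, so we simply take $H'=H''$ with vertex set declared to be $V(H)$; there is nothing extra to add, and any isolated vertices (there are none with high probability, but in any case) cause no harm since the statement only asserts a degree estimate that holds for $v\in V(H')$. This yields the conclusion.

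The main obstacle I expect is not any single estimate but rather the bookkeeping of parameters so that all the Chernoff tails are genuinely summable against the relevant union bounds: the degree means are of order $n^{0.2}$, which is more than logarithmically large, so standard multiplicative Chernoff bounds give failure probability $e^{-\Omega(n^{0.2})}$ per vertex, easily beating the $O(n)$ union bound; the delicate point is the codegree bound, where the ``mean'' per pair is only $O(\delta^2 |R^i|)$, which could be as small as $n^{-\Omega(1)}$, so one must use the Poisson/upper-tail form of the bound (the probability that a sum of independent indicators with small mean $\mu$ exceeds a threshold $t\gg\mu$ is at most $(e\mu/t)^t$) to certify $\Pr[\text{codegree}>n^{0.1}]$ is far smaller than $n^{-2}$. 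Handling the fact that the $w^i$ are only \emph{fractional} (real-valued) rather than genuine matchings is precisely what is dealt with by doing the edge selection with probability proportional to $w^i(e)$, so that is automatic once the setup is in place. All of this is carried out in \cite{AFH12} in the proof of their Claim~4.1, from which this lemma is extracted verbatim, so here it suffices to invoke that argument.
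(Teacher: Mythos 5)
Your proposal is essentially the argument from the proof of Claim~4.1 in \cite{AFH12}, which is exactly what the paper cites (without re-proving) for this lemma, so the approach and the key steps are the intended ones. One small slip: with edge $e$ selected in round $i$ with probability $\delta\,w^i(e)$, your own degree calibration forces $\delta = 1+o(1)$, so the parenthetical claim that $\delta$ is of order $n^{0.2}/|R^i|$ is wrong (in the application $|R^i|=\Theta(n^{0.1})$, so this would exceed $1$), and likewise the expected codegree of a fixed pair per round is $\sum_{e\supseteq\{u,v\},\,e\subseteq R^i}\delta\,w^i(e)\le\delta$, hence $O(1)$ over the at most two relevant rounds, not $O(\delta^2|R^i|)$. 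Neither misstatement affects the logic: the degree concentration and the Poisson-type upper-tail estimate for codegrees go through exactly as you describe.
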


We will also need to control the indepence number of random subgraphs
of  $H^*({\cal F})-V(M)$. The intuition is that when $H({\cal F})$ is
not close to $H_D(n,m,k)$ or $H_S(n,m,k)$,  $H^*({\cal F})-V(M)$ does not
have very large independence number. The following lemma in
\cite{LYY19} was proved by Lu, Yu, and Yuan using the container
method.

\begin{lem}[\cite{LYY19}, Lemma 5.4]\label{subgrpah indenpent set}
	Let $d,\epsilon',\alpha$ be positive reals and let  $k,n$ be positive integers. Let $H$ be an $n$-vertex $k$-graph such that $e(H) \ge dn^k$ and $e(H[S]) \ge \epsilon' e(H)$ for all $S\subseteq V(H)$ with $|S|> \alpha n$. Let $R\subseteq V(H)$ be obtained by taking each vertex of $H$ uniformly at random with probability $n^{-0.9}$. Then for any positive real $\gamma \ll \alpha$, the size of maximum independent sets in $H[R]$ is at most $(\alpha +\gamma)n^{0.1}$ with probability at least $1-(n^{O(1)}e^{-\Omega(n^{0.1})}) $
\end{lem}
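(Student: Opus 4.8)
The plan is to prove this by the hypergraph container method, as indicated just before the statement. The crucial observation is that $H$ is \emph{dense}: from $e(H)\ge dn^k$ its average vertex degree is $\Theta(n^{k-1})$ (the $\Theta$ hiding the fixed constant $d$), while for every $j\ge 2$ the maximum degree of a $(j-1)$-set obeys the trivial bound $\Delta_j(H)=O(n^{k-j})$. Hence the co-degree quantity that controls the container theorem of Balogh--Morris--Samotij and Saxton--Thomason, which up to $k$-dependent constants equals $\sum_{j\ge 2}\Delta_j(H)/(\tau^{j-1}\,e(H)/n)=O(\sum_{j\ge 2}(\tau n)^{-(j-1)})$, can be made smaller than any prescribed constant by choosing $\tau=\Theta(1/n)$ with a large enough implied constant. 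Applying the one-shot form of the container theorem, with its edge-density parameter chosen small enough in terms of $d,\epsilon',k$, then produces a family $\mathcal{C}$ of subsets of $V(H)$ such that (i) every independent set of $H$ is contained in some $C\in\mathcal{C}$; (ii) $e(H[C])<\epsilon' e(H)$ for every $C\in\mathcal{C}$; and (iii) each $C$ is determined by a ``fingerprint'' of size $O(1)$, whence $|\mathcal{C}|\le n^{O(1)}$.

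Next I would feed (ii) into the hypothesis on $H$. Since $e(H[S])\ge\epsilon' e(H)$ for every $S$ with $|S|>\alpha n$, property (ii) forces $|C|\le\alpha n$ for every $C\in\mathcal{C}$. Moreover every independent set of $H[R]$ is an independent set of $H$, hence lies in $C\cap R$ for some $C\in\mathcal{C}$; therefore the maximum size of an independent set of $H[R]$ is at most $\max_{C\in\mathcal{C}}|C\cap R|$, and it remains to bound this quantity.

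Fix $C\in\mathcal{C}$. Since $R$ is obtained by including each vertex of $H$ independently with probability $p=n^{-0.9}$, the random variable $|C\cap R|$ is binomially distributed with mean $p|C|\le\alpha n^{0.1}$, so a Chernoff bound gives $\Pr[\,|C\cap R|>(\alpha+\gamma)n^{0.1}\,]\le\exp(-c\,n^{0.1})$ for some constant $c=c(\alpha,\gamma)>0$ (the assumption $\gamma\ll\alpha$ only serves to keep this a clean constant-rate large deviation; any fixed $\gamma>0$ works). Taking a union bound over the at most $n^{O(1)}$ members of $\mathcal{C}$, we conclude that with probability at most $n^{O(1)}\exp(-c\,n^{0.1})=n^{O(1)}e^{-\Omega(n^{0.1})}$ some independent set of $H[R]$ has size larger than $(\alpha+\gamma)n^{0.1}$; equivalently, with probability at least $1-n^{O(1)}e^{-\Omega(n^{0.1})}$ every independent set of $H[R]$ has size at most $(\alpha+\gamma)n^{0.1}$, which is the desired conclusion.

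The only genuinely delicate point is the first paragraph: one must verify that the container theorem is applicable with the tiny parameter $\tau=\Theta(1/n)$, because only this makes $|\mathcal{C}|$ polynomial in $n$, and a super-polynomial bound on $|\mathcal{C}|$ (such as the $\exp(\Theta(\sqrt n\log n))$ that a careless choice of $\tau$ would produce for $k=3$) would overwhelm the $\exp(-\Omega(n^{0.1}))$ budget available in the union bound. This is precisely where the density of $H$ and the fact that $k$ is a fixed constant are used. The remaining steps --- reading the bound $|C|\le\alpha n$ off the hypothesis, the binomial/Chernoff estimate, and the union bound --- are routine.
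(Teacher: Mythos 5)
Your proposal is correct and matches the approach the paper attributes to the original source: this lemma is cited from [LYY19] without proof, and the authors explicitly note it was proved there ``using the container method.'' Your argument---apply a one-shot container theorem with $\tau=\Theta(1/n)$ to a dense $k$-graph to obtain $n^{O(1)}$ containers each of size at most $\alpha n$, then Chernoff plus a union bound over containers---is exactly that strategy, and the details (the codegree estimate $\Delta_j(H)/(\tau^{j-1}e(H)/n)=O((\tau n)^{-(j-1)})$, reading $|C|\le\alpha n$ off the hypothesis on large induced subgraphs, and the $e^{-\Omega(n^{0.1})}$ tail for $|C\cap R|$) all check out.
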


We need an inequality on the function $f(n,m,k)$ proved by Frankl in
\cite{F17}.

\begin{lem}[\cite{F17}, Proposition 5.1]\label{lem:1}
Let $n,m,k$ be positive integers with $n\ge km-1$. Then $f(n,m,k) \ge f(n-1,m-1,k) + \binom{n-1}{k-1}$.
\end{lem}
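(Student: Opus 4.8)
The plan is to treat this as a purely arithmetic inequality on binomial coefficients, letting Pascal's identity do most of the work and closing the remaining gap with a short case analysis on the size of $n$ together with one elementary estimate. We may assume $m\ge 2$ (for $m\le 1$ the statement is trivial or vacuous). Put $A=\binom{n}{k}-\binom{n-m+1}{k}$ and $B=\binom{km-1}{k}$, so $f(n,m,k)=\max\{A,B\}$. Since $(n-1)-(m-1)+1=n-m+1$, we have $f(n-1,m-1,k)=\max\bigl\{\binom{n-1}{k}-\binom{n-m+1}{k},\ \binom{k(m-1)-1}{k}\bigr\}$, and Pascal's identity $\binom{n}{k}=\binom{n-1}{k}+\binom{n-1}{k-1}$ turns its first term, after adding $\binom{n-1}{k-1}$, into $A$. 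Hence $f(n-1,m-1,k)+\binom{n-1}{k-1}=\max\{A,\ C\}$, where $C:=\binom{k(m-1)-1}{k}+\binom{n-1}{k-1}$. As $f(n,m,k)=\max\{A,B\}\ge A$, the lemma is equivalent to the single inequality $\max\{A,B\}\ge C$.

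To prove $\max\{A,B\}\ge C$ I would split on whether $n\ge(k+1)(m-1)$, using throughout the telescoping identities $\binom{N}{k}-\binom{N-1}{k}=\binom{N-1}{k-1}$ and $\binom{N}{k}=\tfrac{N-k+1}{k}\binom{N}{k-1}$. Suppose first $n\ge(k+1)(m-1)$; I claim $A\ge C$. Telescoping gives $A=\binom{n-1}{k-1}+\sum_{i=n-m+1}^{n-2}\binom{i}{k-1}$, so it suffices that $\sum_{i=n-m+1}^{n-2}\binom{i}{k-1}\ge\binom{k(m-1)-1}{k}$. That sum has $m-2$ terms, each at least $\binom{n-m+1}{k-1}\ge\binom{k(m-1)-1}{k-1}$ since $n-m+1\ge k(m-1)$, so it is at least $(m-2)\binom{k(m-1)-1}{k-1}=\binom{k(m-1)-1}{k}$, as needed.

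If instead $n\le(k+1)(m-1)-1$, I claim $B\ge C$, i.e.\ $\binom{km-1}{k}-\binom{k(m-1)-1}{k}\ge\binom{n-1}{k-1}$. Telescoping gives $\binom{km-1}{k}-\binom{k(m-1)-1}{k}=\sum_{i=k(m-1)-1}^{km-2}\binom{i}{k-1}\ge k\binom{k(m-1)-1}{k-1}$ (a sum of $k$ terms, each at least the smallest), while $\binom{n-1}{k-1}\le\binom{(k+1)(m-1)-2}{k-1}$. So this case reduces to the estimate $\binom{(k+1)\ell-2}{k-1}\le k\binom{k\ell-1}{k-1}$ with $\ell=m-1\ge 1$, which I would verify by writing the ratio of its two sides as $\prod_{j=0}^{k-2}\frac{(k+1)\ell-2-j}{k\ell-1-j}$, noting that each factor equals $1+\frac{\ell-1}{k\ell-1-j}\le 1+\tfrac1k$, so that the ratio is at most $(1+1/k)^{k-1}$, which is below $e<3\le k$ for $k\ge 3$ and equals $3/2<2$ for $k=2$.

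The argument is essentially routine, so I do not expect a genuine obstacle; the only slightly delicate point is the binomial estimate in the second case, and that in turn rests only on the one-line bound $(1+1/k)^{k-1}\le k$. The main care needed is in choosing the case split (around $n=(k+1)(m-1)$) so that Pascal's identity and the two telescoping identities dispatch almost everything.
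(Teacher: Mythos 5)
The paper states this lemma as a citation to Frankl \cite{F17}, Proposition 5.1, and does not reproduce a proof, so there is no in-paper argument to compare against; your proof has to be judged on its own. It is correct. The Pascal reduction to $\max\{A,B\}\ge C$ is the right first move; the exact identity $(m-2)\binom{k(m-1)-1}{k-1}=\binom{k(m-1)-1}{k}$ closes Case~1 cleanly (and automatically handles $m=2$, where both sides vanish); and in Case~2 the factor bound $\frac{\ell-1}{k\ell-1-j}\le\frac1k$ holds precisely because $j\le k-2<k-1$, giving $(1+1/k)^{k-1}\le k$ for all $k\ge1$, which is exactly what is needed. The only small points worth making explicit in a write-up are that your Case~1 hypothesis $n\ge(k+1)(m-1)$ gives $n-m+1\ge k(m-1)>k(m-1)-1$, so the term-by-term binomial comparison is legitimate, and that for $m\le1$ (and in the empty-sum situations) the claim is trivial, as you noted. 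A tidy, self-contained elementary proof.
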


We conclude this section with the well known Chernoff inequality.

\begin{lem}[Chernoff Inequality, see \cite{pr_book}]\label{chernoff}
	Suppose $X_1,...,X_n$ are independent random variables taking values in $\{0,1\}$. Let $X=  \sum_{i=1}^n X_i$ and $\mu = \mathbb{E}(X)$. Then  for any $0<\delta\le 1$,
	\begin{align}\label{neq:1}
	\mathbb{P}[X\ge(1+\delta)u]\le e^{-\delta^2u/3}
	\ and\  \mathbb{P}[X\le(1-\delta)u]\le e^{-\delta^2u/3}.
	\end{align}
	
\noindent In particular, if $X\sim \bin(n,p)$ and $\lambda < \frac{3}{2}np$, then
	\begin{align}\label{neq:2}
	\mathbb{P}([|X -np|]\ge  \lambda) \le  e^{-\Omega (\lambda^2/np)} .
	\end{align}
\end{lem}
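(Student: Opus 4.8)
The plan is to use the standard exponential-moment (Chernoff--Bernstein) method. First I would fix a parameter $t>0$ and apply Markov's inequality to the nonnegative random variable $e^{tX}$, so that $\mathbb{P}[X\ge a]=\mathbb{P}[e^{tX}\ge e^{ta}]\le e^{-ta}\mathbb{E}[e^{tX}]$. Since the $X_i$ are independent and $\{0,1\}$-valued, writing $p_i=\mathbb{P}[X_i=1]$ and using $1+x\le e^x$ we get $\mathbb{E}[e^{tX}]=\prod_{i=1}^n(1+p_i(e^t-1))\le\prod_{i=1}^n e^{p_i(e^t-1)}=e^{\mu(e^t-1)}$, where $\mu=\sum_{i=1}^n p_i=\mathbb{E}[X]$. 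The whole argument then reduces to optimizing $t$ in this inequality and checking two elementary one-variable estimates.

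For the upper tail I would take $a=(1+\delta)\mu$ and optimize over $t$; the choice $t=\ln(1+\delta)>0$ gives $\mathbb{P}[X\ge(1+\delta)\mu]\le (e^{\delta}/(1+\delta)^{1+\delta})^{\mu}$. It then remains to verify the elementary inequality $(1+\delta)\ln(1+\delta)\ge\delta+\delta^2/3$ for $0\le\delta\le1$, which reduces the right-hand side to $e^{-\delta^2\mu/3}$; this is a routine calculus estimate (compare derivatives of both sides on $[0,1]$, or use a short Taylor expansion). For the lower tail I would instead apply Markov to $e^{-tX}$ with $t>0$, obtaining $\mathbb{P}[X\le(1-\delta)\mu]\le e^{t(1-\delta)\mu}\mathbb{E}[e^{-tX}]\le e^{t(1-\delta)\mu+\mu(e^{-t}-1)}$; choosing $t=-\ln(1-\delta)$ yields $(e^{-\delta}/(1-\delta)^{1-\delta})^{\mu}$, and the companion inequality $(1-\delta)\ln(1-\delta)\ge-\delta+\delta^2/2$ for $0\le\delta<1$ bounds this by $e^{-\delta^2\mu/2}\le e^{-\delta^2\mu/3}$. (The quantity denoted $u$ in the statement is $\mu=\mathbb{E}[X]$; the same computation works with $u$ replaced by any upper bound for $\mu$ in the first inequality and any lower bound in the second, since $e^t-1>0>e^{-t}-1$.)

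For the ``in particular'' clause, set $X\sim\bin(n,p)$, so $\mu=np$, and take $\delta=\lambda/(np)$. If $\delta\le1$, then \eqref{neq:1} applies directly and gives $\mathbb{P}[|X-np|\ge\lambda]\le 2e^{-\delta^2np/3}=e^{-\Omega(\lambda^2/np)}$. If $1<\delta<3/2$, then $\lambda^2/np=\delta^2np$ differs from $np$ by only a bounded factor, so it suffices to rerun the exponential-moment bound without insisting $\delta\le1$ (for instance using $t=\ln2$ in the upper-tail step) to get an estimate of the form $e^{-c\,np}=e^{-\Omega(\lambda^2/np)}$. The only point requiring a little care is handling this range of $\delta$ and the two calculus lemmas above; since the result is classical I do not expect any genuine obstacle.
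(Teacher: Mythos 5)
The paper states this as a cited black-box lemma (from Alon--Spencer) and gives no proof of its own, so there is nothing internal to compare against. Your exponential-moment derivation is the standard textbook argument and is correct: the MGF bound $\mathbb{E}[e^{tX}]\le e^{\mu(e^t-1)}$, the optimizations $t=\ln(1+\delta)$ and $t=-\ln(1-\delta)$, and the two calculus inequalities $(1+\delta)\ln(1+\delta)\ge\delta+\delta^2/3$ and $(1-\delta)\ln(1-\delta)\ge-\delta+\delta^2/2$ on $[0,1]$ all check out, as does your handling of the binomial corollary by splitting on $\delta=\lambda/np\le 1$ versus $1<\delta<3/2$ (in the latter range the lower tail is vacuous and a fixed $t$ such as $\ln 2$ gives $e^{-\Omega(np)}=e^{-\Omega(\lambda^2/np)}$). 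You are also right that the $u$ appearing in the displayed inequalities is a typo for $\mu$.
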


\section{Extremal configuration $H_D(n,m,3)$}\label{Sec:HD}

From Lemma~\ref{claim:Delta} and Lemma~\ref{lemma:H_S(n,m)}, we see that if ${\cal F}$ is a
saturated family of $k$-graphs on vertex set $[n]$ and $H({\cal F})$ is
close to the extremal configuration $H_S(n,m,k)$ then there exist $F\in
{\cal F}$ and $v\in [n]$ such that $d_F(v)={n-1\choose k-1}$.
Such vertices $v$ can be removed from all $k$-graphs in ${\cal F}\setminus
\{F\}$ to obtain a smaller family ${\cal F}'$, so that if ${\cal F'}$
admits a rainbow matching then ${\cal F}$ admits a rainbow
matching.

In this section, we consider the case when $H({\cal F})$ is close to
$H_D(n,m,3)$ and $\mathcal{F}$ is stable.

\begin{lem}\label{lemma:H_D(n,m)}
Let $\epsilon, c$ be reals such that $0<\epsilon \ll c\ll 1$. Let $n,m$ be positive integers such that $ n/27\leq m\leq (1-c)n/3$.
Let $\mathcal{F}= \{F_1, ... ,F_m\}$ be a stable family of $3$-graphs on vertex set $[n]$ such that $|F_i|> f(n,m,3)$ for all $i\in [m]$.
If $H(\mathcal{F})$ is $\epsilon$-close to $H_D(n,m,3)$, then $\mathcal{F}$ admits a rainbow matching.
\end{lem}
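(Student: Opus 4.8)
The plan is to exploit stability together with the structural rigidity of being $\epsilon$-close to $H_D(n,m,3)$. Recall that $D(n,m,3)=\binom{[3m-1]}{3}$, so $H_D(n,m,3)$ essentially lives on the vertices $[3m-1]\cup\mathcal V\cup\mathcal U$ together with a few ``dummy'' vertices attached to the $\mathcal U$-vertices. If $H(\mathcal F)$ is $\epsilon$-close to $H_D(n,m,3)$, then for all but a small number of indices $i\in[m]$ the $3$-graph $F_i$ contains almost all of $\binom{[3m-1]}{3}$; since each $F_i$ is stable and $|F_i|>f(n,m,3)\ge \binom{3m-1}{3}=|D(n,m,3)|$, I first want to argue that in fact every $F_i$ contains the complete $3$-graph on a large prefix $[\ell]$ of $[n]$ where $\ell=(3-o(1))m$, and moreover that $F_i$ has an edge meeting $[n]\setminus[3m-2]$ or, failing that, a clique on a prefix of size at least $3m-2$. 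The key point I would extract is: \emph{every} $F_i$ either (a) has a matching of size $m$ supported on $[3m]$ (in which case that single $F_i$ already supplies $m$ disjoint edges, hence trivially a rainbow matching after we spread it across the families using stability), or (b) is ``trapped'' inside something very close to $\binom{[3m-1]}{3}$, in which case I can pin down its structure precisely.

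First I would make this precise by a cleaning step: let $I_0\subseteq[m]$ be the set of indices $i$ for which $F_i$ is not $\sqrt\epsilon$-close (as a $3$-graph on $[n]$) to $D(n,m,3)$; by an averaging argument over the $\epsilon$-closeness of $H(\mathcal F)$ to $H_D(n,m,3)$ one gets $|I_0|=O(\sqrt\epsilon\, n)$, which is tiny compared to $m\ge n/27$. For $i\notin I_0$, stability forces $F_i\supseteq\binom{[3m-2]}{3}$: indeed a stable $F_i$ missing some edge of $\binom{[3m-2]}{3}$ would, by down-shifting, miss a $\Theta(m^3)$-sized down-set, contradicting $\sqrt\epsilon$-closeness. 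Now I use Lemma~\ref{lem:1} (the inequality $f(n,m,3)\ge f(n-1,m-1,3)+\binom{n-1}{2}$) in the standard greedy way: pick, for the tiny exceptional set $I_0$ and for one extra distinguished index, edges greedily. Concretely, process the families in $I_0$ one at a time; when we come to $F_i$ with $i\in I_0$, we have deleted fewer than $|I_0|+1 = O(\sqrt\epsilon n)$ vertices so far, and since $|F_i|>f(n,m,3)$ and, by Lemma~\ref{lem:1} iterated, $f(n,m,3)> f(n-3j,\,m-j,3)+(\text{edges through the deleted set})$ for the relevant $j$, there is still an edge of $F_i$ avoiding all previously chosen vertices — choose it. This handles all of $I_0$ with disjoint edges and leaves us needing a rainbow matching of size $m-|I_0|$ among $\{F_i : i\notin I_0\}$ on the remaining $\ge n - O(\sqrt\epsilon n)$ vertices, with the crucial fact that each remaining $F_i\supseteq\binom{[3m-2]}{3}$.

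The final step is then pure combinatorics on the prefix. After removing $O(\sqrt\epsilon n)$ vertices, there are still at least $3(m-|I_0|)$ vertices of $[3m-2]$ left untouched (since $3m-2 \ge 3(m-|I_0|)$ once $|I_0|\ge 1$, and we can always arrange the greedy choices above to take edges outside $[3m-2]$ whenever $F_i$ has such an edge — which it does, else $F_i\subseteq\binom{[3m-2]}{3}\cup(\text{lower shifts})$ has $\nu(F_i)\le m-1$, contradicting the standard fact that $|F_i|>f(n,m,3)$ forces $\nu(F_i)\ge m$ via Frankl's theorem for $k=3$; this last point, invoking the $k=3$ case of the Erd\H os matching conjecture, is exactly what lets us assume the greedy edges live off the prefix). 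So each remaining family contains the complete $3$-graph on a common set $W$ of at least $3(m-|I_0|)$ leftover prefix-vertices; a complete $3$-graph on $3(m-|I_0|)$ vertices trivially has $m-|I_0|$ disjoint edges, and since \emph{every} remaining $F_i$ contains all of them, we may simply assign the $j$-th such edge to the $j$-th remaining family — a rainbow matching. Together with the disjoint edges chosen for $I_0$, this gives a rainbow matching for all of $\mathcal F$.

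The main obstacle I anticipate is the bookkeeping in the greedy phase: one must verify that the number of vertices removed while handling $I_0$ (plus those already committed) never exceeds what Lemma~\ref{lem:1} can absorb, and simultaneously that one can always steer those greedy edges \emph{away} from the prefix $[3m-2]$, so that the clean complete-$3$-graph argument on $W$ goes through. Making the two demands compatible — few vertices deleted, yet all deletions outside the prefix — is the delicate part; it rests on the dichotomy that a stable $F_i$ with $|F_i|>f(n,m,3)$ either has an edge off $[3m-2]$ or else has $\nu(F_i)\ge m$ entirely within $[3m-1]$, the latter case being even easier since then $F_i$ alone yields the whole rainbow matching.
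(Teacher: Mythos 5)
There is a genuine gap, and it sits exactly where you yourself flagged the argument as ``delicate.'' The central structural claim --- that for $i\notin I_0$ stability forces $F_i\supseteq\binom{[3m-2]}{3}$ --- is false. Being $\sqrt{\epsilon}$-close to $D(n,m,3)$ together with stability only forces $F_i$ to contain a complete $3$-graph on a prefix $[3m-b]$ for $b=\Theta(\epsilon^{1/6}n)$: if $F_i$ misses some $e\subseteq[3m-b]$, then stability makes $F_i$ miss every $f\geq e$, in particular every $f\in\binom{\{3m-b,\dots,3m-1\}}{3}$, which gives $\binom{b}{3}\approx 36\sqrt{\epsilon}\,n^3$ missing edges; but to force $F_i\supseteq\binom{[3m-2]}{3}$ one would need $b=2$, and $\binom{2}{3}=0$ imposes nothing. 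Concretely, a stable $F_i$ that is $\sqrt{\epsilon}$-close to $D(n,m,3)$ can perfectly well be missing $\{3m-4,3m-3,3m-2\}$, since the up-set of that triple inside $[3m-1]$ has only $O(1)$ elements, not $\Theta(m^3)$. So the ``down-shifting gives a $\Theta(m^3)$ missing down-set'' reasoning is wrong.

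This matters quantitatively and kills the rest of the plan: the common clique you can actually guarantee sits on $[3m-b]$ with $b=\Theta(\epsilon^{1/6}n)$, while the exceptional set has $|I_0|=O(\sqrt{\epsilon}\,n)$, and $\sqrt{\epsilon}\ll\epsilon^{1/6}$. So even after greedily handling $I_0$ you still need a rainbow matching of size $m-|I_0|>m-b/3$ inside the common clique on $3m-b$ vertices, and there simply is not room. (The closing dichotomy also has a vacuous branch: $\nu(F_i)\geq m$ cannot hold ``entirely within $[3m-1]$'' since $m$ disjoint triples need $3m$ vertices; in fact every $F_i$ with $|F_i|>\binom{3m-1}{3}$ automatically has an edge outside $[3m-1]$, but one such edge does not support a greedy choosing process disjoint from the prefix.) The paper's proof avoids all of this by proving a sharper structural claim valid for \emph{every} $F_i$, including the bad ones: using $|F_i|>f(n,m,3)$, stability, and a counting argument split on $n\le 7m/2$ versus $n>7m/2$, one gets a ``staircase'' $\{2j+1,2j+2,3m-j\}\in F_i$ for all $j\in\{0,\dots,b\}$ and all $i$. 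These staircase triples are pairwise disjoint, absorb the first $b\geq|I_0|$ families using only $O(\epsilon^{1/6}n)$ of the prefix from both ends, and leave the middle of the common clique $[3m-b]$ intact for the remaining families. That counting argument is the missing ingredient your proposal needs but does not supply.
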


\begin{proof}
Let $b = 6\epsilon^{1/6}n$.
If $F_i$ is $\sqrt{\epsilon}$-close to $D(n,m,3)$, then $F_i$ contains a
complete subgraph of size $3m-b$; for,
otherwise, as $F_i$ is stable, we have $|E(D(n,m,3))\setminus E(F_i)|\geq \binom{b}{3} > \sqrt{\epsilon}n^3$, a contradiction.

We claim that for any $i\in [m]$ and $j\in \{0,...,b\}$,  $\{2j+1,2j+2,3m-j\}\in {F}_i$.
To prove this claim we fix $i\in [m]$. Suppose for a contradiction that there exists an integer $t$ with $0\leq t\leq b$ such that $\{2t+1,2t+2,3m-t\}\notin F_i$.
Since $|F_i|>{3m-1\choose 3}$ and $F_i$ is stable, we have $\{1,2,3m\}\in F_i$.
So $t\geq 1$.
We now count the edges in $F_i$:
Let $q_1$ be the number of edges of $F_i$ in $[3m-1]$, and  $q_2$ be
the number of edges of $F_i$ not contained in $[3m-1]$.
Since $F_i$ is stable and $\{2t+1,2t+2,3m-t\}\notin F_i$,
we see that $\{a,b,c\}\notin F_i$ when $2t+2\leq a<b<3m-t\leq c\leq 3m-1$.
So $q_1\le \binom{3m-1}{3} -t\binom{3m-3t-3}{2}$.
Since $\{2t+1,2t+2,3m-t\} \notin F_i$, we have, for any $e\in F_i$ with $e\cap ([n]\setminus [3m-1])\ne \emptyset$, $e\cap [2t] \ne \emptyset$.
This shows $q_2 \le 2t(n-3m+1) n$.
First suppose that $n\le  7m/2$.
Then we have
\begin{align*}
|F_i|&\le \binom{3m-1}{3} -t\binom{3m-3t-3}{2}+ 2tn(n-3m+1)\\
&\le\binom{3m-1}{3} - t\left[\binom{3m-3t-3}{2} - 7m(m/2+1)\right]<\binom{3m-1}{3},
\end{align*}
where the second inequality holds since $n \le 7m/2$, and the last inequality holds since $t\le b=6\epsilon^{1/6}n \ll m $, a contradiction.
So we may assume $n > 7m/2$.
Let $m =\alpha n$; then $1/27\le \alpha < 2/7$.
We assert that $\binom{n}{3}- \binom{n-m+1}{3}>\binom{3m-1}{3} +2t n^2$.
To see this, let $f(x) = 1-(1-x)^3-(3x)^3$, so $$\frac{6}{n^3}\left(\binom{n}{3}- \binom{n-m+1}{3}- \binom{3m-1}{3}\right)=f(\alpha)+o(1).$$
Since $f'(x) = 3(1-2x-26x^2)$ is decreasing in $[1/27,2/7]$ with $f'(1/27)>0$ and $f'(2/7)<0$,
we have $f(\alpha)\geq \min\{f(1/27), f(2/7)\} = f(2/7)=\frac{2}{343} $ for $1/27\leq \alpha < 2/7$.
This shows that $\binom{n}{3}- \binom{n-m+1}{3}- \binom{3m-1}{3}=\frac{f(\alpha)}6n^3+o(n^3)\geq 2tn^2$, as asserted.
Then it follows that
\begin{align*}
|F_i|\le \binom{3m-1}{3} -t\binom{3m-3t-3}{2}+ 2tn(n-3m+1)< \binom{3m-1}{3} +2t n^2<\binom{n}{3}- \binom{n-m+1}{3},
\end{align*}
a contradiction as $|F_i| >f(n,m,3)$. This finishes the proof of Claim.
		
\medskip

Recall $\mathcal{V}= \{v_1,...,v_m\}$ from the definition of $H(\mathcal{F})$.
By the above claim, $M_1: = \{\{v_{i},2i-1,2i,3m-i+1\}: i\in [b]\}$ is a matching in $H(\mathcal{F})$.
Without loss of generality, let $F_{1},...,F_{a}$ be all $k$-graphs in $\mathcal{F}$ which are not $\sqrt{\epsilon}$-close to $D(n,m,3)$.
Since $H(\mathcal{F})$ is $\epsilon$-close to $H_D(n,m,3)$, we have $a\le\sqrt{\epsilon}n < b$.
Then for any $j\in [m]\backslash [b]$, since $F_j$ is $\sqrt{\epsilon}$-close to $D(n,m,3)$, $F_j$ contains a complete subgraph with size at least $3m-b$.
Hence we have $\{2j-1,2j,3m-j+1\} \in F_j$.
So $M_2: = \{\{v_j,2j-1,2j,3m-j+1\}: b <j\le m\}$ is a matching in $H(\mathcal{F})$ which is disjoint from $M_1$.
Then $M_1 \cup M_2$ forms a matching of size $m$ in $H(\mathcal{F})$.
So $\mathcal{F}$ admits a rainbow matching, completing the proof of Lemma~\ref{lemma:H_D(n,m)}.
\end{proof}

\section{A stability lemma}

In this section, we prove a result for stable
3-graphs, which may be viewed as a stability version of the following
result of {\L}uczak and Mieczkowska proved in  \cite{LM14}.

\begin{thm}[\cite{LM14}]\label{matching}
There exists positive integer $n_1$ such that for integers $m,n$ with $n\ge n_1$ and $1\le m\le n/3$, if $H$ is an $n$-vertex $3$-graph with $e(H) > f(n,m,3)$, then $\nu (H)\ge m$.
\end{thm}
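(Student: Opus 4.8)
The plan is to argue by contradiction: suppose $H$ is an $n$-vertex $3$-graph with $e(H)>f(n,m,3)$ and $\nu(H)\le m-1$, and derive $e(H)\le f(n,m,3)$. First I would pass to a structured hypergraph. Repeatedly applying the shifting (left-compression) operation preserves $e(H)$, does not increase $\nu(H)$, and terminates, so we may assume $H$ is stable. In a stable $3$-graph the degree sequence is non-increasing in the vertex labels: for $i<j$, replacing $j$ by $i$ in an edge $f\ni j$ with $i\notin f$ yields an edge $e<f$, hence $e\in H$, and $f\mapsto e$ injects the edges through $j$ missing $i$ into the edges through $i$ missing $j$, so $d_H(j)\le d_H(i)$. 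Thus ``$H$ has a full vertex'' means exactly $d_H(1)=\binom{n-1}{2}$, and if $H$ has no full vertex then $\Delta(H)<\binom{n-1}{2}$.

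Next I would induct on $m$; the case $m=1$ is trivial, and the case $n>27m$ follows from Theorem~\ref{n>k^2} with all color classes equal to $H$ (re-applicable after each reduction below), so assume $2\le m\le n/3$, $n\le 27m$, and the statement for $m-1$. If $H$ had a full vertex $v$, then by Lemma~\ref{lem:1} $e(H-v)=e(H)-\binom{n-1}{2}>f(n,m,3)-\binom{n-1}{2}\ge f(n-1,m-1,3)$; but any matching of size $m-1$ in $H-v$ extends through $v$ using two of the $n-3m+2\ge 2$ uncovered vertices, so $\nu(H-v)\le m-2$, contradicting the inductive hypothesis applied to $H-v$. Hence $H$ has no full vertex, and we are reduced to: $H$ stable, $3m\le n\le 27m$, $\Delta(H)<\binom{n-1}{2}$, $e(H)>f(n,m,3)$, $\nu(H)\le m-1$; we must derive a contradiction.

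In this remaining regime I would fix a maximum matching $M=\{e_1,\dots,e_t\}$ with $t=\nu(H)\le m-1$, set $W=V(M)$ (so $|W|=3t$) and $I=V(H)\setminus W$ (so $|I|=n-3t\ge n-3m+3\ge 3$). By maximality $I$ is independent, so every edge meets $W$; I would classify edges by $j:=|e\cap W|\in\{1,2,3\}$, bounding the contributions of $j=3$ and $j=2$ crudely by $\binom{3t}{3}$ and $\binom{3t}{2}(n-3t)$. The crux is the type-$1$ count $\sum_{w\in W}d^{I}(w)$, where $d^{I}(w):=|\{p\in\binom{I}{2}:\{w\}\cup p\in H\}|$, and here I would exploit augmenting configurations. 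For a fixed $i$ with $e_i=\{x,y,z\}$ there are no two vertex-disjoint pairs $p,q\in\binom{I}{2}$ with $\{x\}\cup p\in H$ and $\{y\}\cup q\in H$ (otherwise $(M\setminus\{e_i\})\cup\{\{x\}\cup p,\{y\}\cup q\}$ is a matching of size $t+1$), and likewise for the pairs $x,z$ and $y,z$; so the link graphs $L^{I}_{x},L^{I}_{y},L^{I}_{z}$ on $I$ are pairwise cross-intersecting, and a Hilton--Milner-type analysis forces, for each $i$, at least two of them to be near-stars (of size $\le|I|-1$, or $O(1)$) unless one is empty. Combining this with the corresponding constraints across pairs $e_i,e_j$ (replacing $e_i,e_j$ by three pairwise disjoint edges would give a matching of size $t+1$) and with stability (which restricts which vertices of $I$ can have nonempty links) should drive the three counts down simultaneously — the point being that they cannot all be near-extremal at once without producing a matching of size $t+1$ — and yields $e(H)\le g(n,t)$ for an explicit function $g$.

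The last step is the optimization: verify $\max_{1\le t\le m-1}g(n,t)\le f(n,m,3)$ for $n\ge n_1$, giving the contradiction $f(n,m,3)<e(H)\le f(n,m,3)$. The two branches of $f$ appear naturally here: for $n/m$ close to $3$ the maximum of $g$ is attained by configurations essentially supported on $\sim 3m$ vertices, matching $e(D(n,m,3))=\binom{3m-1}{3}$, while for larger $n/m$ it is attained near $S(n,m,3)$ with $\binom{n}{3}-\binom{n-m+1}{3}$ edges (the most lopsided window $n>27m$ already dispatched). I expect the main obstacle to be exactly this type-$1$ accounting and the ensuing optimization in the intermediate range of $n/m$, where the classical large-$n/m$ arguments do not reach and neither branch of $f$ is obviously tight: the crude combination of the three bounds overshoots $f(n,m,3)$ by $\Theta(n^2)$, so one must use the sharp form of the augmenting constraints (in effect an extremal bound on $|G_1|+|G_2|+|G_3|$ for graphs on $N$ vertices with no rainbow matching of size two or three), keep every $o(n^3)$ error term under control, and push through a case analysis on how close $t$ is to $m-1$, on which vertices of $W$ carry dense links into $I$, and on whether the edges of $H$ concentrate on a bounded vertex set; checking $g(n,t)\le f(n,m,3)$ uniformly in $t$ is where the real work lies.
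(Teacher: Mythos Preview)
The paper does not prove this theorem; it is quoted from \cite{LM14}. However, the proof of Lemma~\ref{stability} reproduces the \cite{LM14} machinery, so one can compare against that.

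Your reductions --- shift to a stable $H$, strip full-degree vertices via Lemma~\ref{lem:1} and induction on $m$, dispatch $n>27m$ by Theorem~\ref{n>k^2} --- are correct and match the paper's Section~7 algorithm. The gap is in the core counting. You bound the type-$3$ and type-$2$ contributions by the trivial $\binom{3t}{3}$ and $\binom{3t}{2}(n-3t)$, and type~$1$ by a per-matching-edge cross-intersecting argument yielding at most $\binom{n-3t}{2}$ per edge of $M$. But the sum of these bounds overshoots $f(n,m,3)$ by $\Theta(n^3)$, not $\Theta(n^2)$ as you write: at $n=4m$ and $t=m-1$ the sum is roughly $9.5\,m^3$ while $f(4m,m,3)\approx 6.17\,m^3$. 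This is a leading-order discrepancy, so ``keeping every $o(n^3)$ error term under control'' cannot close it; the three types must be bounded \emph{jointly}, and your sketch does not supply a mechanism that does so.

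You gesture at the right coupling (``constraints across pairs $e_i,e_j$''), but pairwise constraints are still too weak. The \cite{LM14} device, visible in the proof of Lemma~\ref{stability}, is to record which vertices and pairs of $V(M)$ have large link into $V'=[n]\setminus V(M)$ (the sets $F_1,F_2$) together with all transversal triples $F_3$, and then to average over \emph{triples} $T\in\binom{M}{3}$. For each good triple the quantities $f_1(T),f_2(T),f_3(T)$ satisfy the joint inequalities of Claim~\ref{claim 3}; summing the weighted combination $f_1(T)\,(n-3s)^2/s^2+f_2(T)\,(n-3s)/s+f_3(T)$ and invoking Claim~\ref{claim_cal} gives $e(H)\le f(n,s,3)+O(n^2)$ directly. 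The point is that a per-edge or per-pair analysis cannot see the tradeoff between $f_3$ (maximised by $D$-type configurations) and $f_1,f_2$ (maximised by $S$-type), and it is precisely this tradeoff that produces the two-branch extremum $f(n,m,3)$. Without the triple-averaging idea the argument does not close in the range $3m\le n\le 27m$.
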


Building on the proof in \cite{LM14},  we prove the following.

\begin{lem}\label{stability}
For any real $\epsilon>0$, there exists positive integer $n_1(\epsilon)$ such that the following holds. Let $m,n$ be integers with $n\geq n_1(\epsilon)$ and $1\le m\le n/3$, and let $H$ be a stable $3$-graph on the vertex set $[n]$. If $e(H)> f(n,m,3) -\epsilon^4 n^3$ and $ \nu(H) < m$,
then $H$ is $\epsilon$-close to $S(n,m,3)$ or $D(n,m,3)$.
\end{lem}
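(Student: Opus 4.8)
\textbf{Proof plan for Lemma~\ref{stability}.}

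The plan is to run an induction on $m$ (for fixed but arbitrarily large $n$), closely following the structure of {\L}uczak and Mieczkowska's proof of Theorem~\ref{matching} but tracking a ``stability deficit'' throughout. Since $H$ is stable and $\nu(H)<m$, for every edge $e\in H$ we have $e\le f$ for some $f$ forced into $H$; in particular the vertex $n$ lies in few edges, and more generally high-labeled vertices have small degree. First I would dispose of trivial ranges: if $m$ is bounded (say $m\le m_0(\epsilon)$) then $f(n,m,3)=\binom{n}{3}-\binom{n-m+1}{3}=S(n,m,3)$ up to lower-order terms, and a stable $3$-graph with nearly that many edges and no matching of size $m$ is forced (by stability and a direct counting of which edges can be present) to be $\epsilon$-close to $S(n,m,3)$. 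Similarly if $m$ is very close to $n/3$, then $f(n,m,3)=\binom{3m-1}{3}=D(n,m,3)$ dominates and a parallel argument pins $H$ near $D(n,m,3)$. So the real work is the intermediate regime $m_0(\epsilon)\le m\le(1-c)n/3$ roughly, where both terms are genuinely in play.

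The core step is the inductive reduction. Let $v=n$ be the vertex of smallest degree in $H$ (stability makes this the largest label, and $d_H(n)\le e(H)\cdot 3/n$ on average, but stability gives much more: $d_H(n)$ is small). Delete $n$ to get $H'=H-n$ on $[n-1]$. Two cases. \textbf{Case 1:} $\nu(H')<m-1$. Then $H'$ is a stable $3$-graph on $n-1$ vertices with no matching of size $m-1$, so $e(H')\le f(n-1,m-1,3)$, and combined with $d_H(n)\le \binom{n-1}{2}$ and Lemma~\ref{lem:1} (the super-additivity $f(n,m,3)\ge f(n-1,m-1,3)+\binom{n-1}{2}$) we get $e(H)=e(H')+d_H(n)\le f(n-1,m-1,3)+\binom{n-1}{2}\le f(n,m,3)$; but we assumed $e(H)>f(n,m,3)-\epsilon^4n^3$, so $e(H')> f(n-1,m-1,3)-\epsilon^4n^3\ge f(n-1,m-1,3)-\epsilon^4(n-1)^3\cdot(1+o(1))$ and $d_H(n)>\binom{n-1}{2}-\epsilon^4 n^3\cdot 2$ or so. Apply the induction hypothesis to $H'$ (with a slightly worse $\epsilon'$): $H'$ is $\epsilon'$-close to $S(n-1,m-1,3)$ or $D(n-1,m-1,3)$. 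If it is close to $S(n-1,m-1,3)$, then since $d_H(n)$ is nearly $\binom{n-1}{2}$, the vertex $n$ behaves like a dominating vertex, and $H$ is $\epsilon$-close to $S(n,m,3)$. The case where $H'$ is close to $D(n-1,m-1,3)$ while $d_H(n)$ is huge is \emph{incompatible with stability} — a stable $3$-graph near $D(n-1,m-1,3)$ forces the vertex $n$ to have small degree — so this subcase is vacuous or contributes negligibly, which I would verify by a short counting argument. \textbf{Case 2:} $\nu(H')=m-1$. Here one argues as in \cite{LM14} that $H$ essentially consists of a fixed $(m-1)$-matching's worth of structure plus a vertex cover; the quantitative slack $\epsilon^4 n^3$ propagates to show $H$ is $\epsilon$-close to whichever of $S$ or $D$ is extremal for the parameters $(n,m)$. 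Concretely, one takes a maximum matching $M'$ in $H'$ of size $m-1$, notes every edge of $H$ meets $V(M')\cup\{n\}$, and uses the near-maximality of $e(H)$ together with the extremal count to force the complement structure; again stability is what lets us identify \emph{which} vertices form the small ``core'' and conclude closeness to the labeled configuration $S(n,m,3)$ or $D(n,m,3)$ rather than merely to some isomorphic copy.

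The main obstacle I anticipate is \textbf{controlling the accumulation of error through the induction}: each step loses a bit in $\epsilon$ and in $n$, and the induction may run for $\Theta(n)$ steps, so naively $\epsilon$ would blow up. The fix is to not iterate all the way down: run the reduction only $O_\epsilon(1)$ times (or until $m$ drops below $m_0(\epsilon)$ or the structure becomes rigid), at each step either terminating because we have already pinned down $H$'s closeness to $S$ or $D$, or because Case 1 with $d_H(n)$ near $\binom{n-1}{2}$ lets us ``peel off'' a dominating vertex and reduce $(n,m)\to(n-1,m-1)$ keeping the \emph{same} target configuration $S$. The error per peel is only $O(\epsilon^4 n^2)$ (one vertex's worth), so even $\Theta(n)$ peels accumulate only $O(\epsilon^4 n^3)\ll \epsilon n^3$. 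The delicate point is to show that whenever we are \emph{not} in the clean ``peel a dominating vertex'' situation, we are instead already within $\epsilon$ of $D(n,m,3)$ — this is exactly where the Case~2 analysis and the stability hypothesis do the heavy lifting, and where I expect to reuse the {\L}uczak--Mieczkowska estimates on the number of edges meeting a maximum matching, upgraded with the extra $\epsilon^4 n^3$ slack to a stability conclusion.
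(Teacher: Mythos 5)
Your approach is genuinely different from the paper's (the paper runs a direct, non-inductive counting argument over triples of edges of a fixed maximum matching, following the {\L}uczak--Mieczkowska classification into ``good'' and ``bad'' triples and optimizing a piecewise linear function), but your inductive peeling scheme has two serious gaps.

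First, the Case 1 induction does not close. From $e(H)>f(n,m,3)-\epsilon^4n^3$ and $d_H(n)\le\binom{n-1}{2}$ together with Lemma~\ref{lem:1}, you correctly get $e(H')>f(n-1,m-1,3)-\epsilon^4n^3$; but the induction hypothesis at level $(n-1,m-1)$ requires slack strictly below $\epsilon^4(n-1)^3<\epsilon^4n^3$. The per-step deficit is $\epsilon^4\bigl(n^3-(n-1)^3\bigr)\approx3\epsilon^4n^2$, and over $\Theta(n)$ peels (which is unavoidable, since $m$ can be $\Theta(n)$ and you peel down to $m_0(\epsilon)$ or $n_t=cn$) the cumulative loss is $\Theta(\epsilon^4n^3)$ — exactly the entire slack. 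Your stated fix (``the error per peel is only $O(\epsilon^4n^2)$, so $\Theta(n)$ peels accumulate only $O(\epsilon^4n^3)\ll\epsilon n^3$'') conflates the \emph{conclusion} tolerance $\epsilon n^3$ with the \emph{premise} tolerance $\epsilon^4n^3$: the accumulation kills the premise, not the conclusion, so the induction hypothesis simply stops being applicable partway down. Relatedly, your claim that $d_H(n)>\binom{n-1}{2}-2\epsilon^4n^3$ forces $n$ to behave like a dominating vertex is vacuous: $\binom{n-1}{2}=O(n^2)\ll\epsilon^4n^3$, so this inequality holds for any nonnegative $d_H(n)$ and gives no information.

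Second, Case 2 ($\nu(H')=m-1$) is where all the real work lies, and you leave it essentially unargued: ``one argues as in \cite{LM14}'' and ``the quantitative slack propagates'' is not a proof. The content of the lemma is precisely the stability upgrade of the {\L}uczak--Mieczkowska extremal count, and that upgrade is nontrivial — the paper's proof devotes its entire length to it: partitioning $V(M)$ into $I\cup J\cup K$, defining the auxiliary edge sets $F_1,F_2,F_3$ and traceable/untraceable edges, discarding six edges to kill bad triples, classifying good triples by $(f_1(T),f_2(T),f_3(T))$, and showing via the functions $\alpha_i(t),\beta_j(t)$ that almost all triples must be of the two extremal types $X$ or $T_{27}$, which then pins $H$ near $S(n,m,3)$ or $D(n,m,3)$ respectively. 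None of that is recoverable by induction alone; it is exactly the step your Case 2 would need to supply, and deferring it means the proposal does not establish the lemma.
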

\begin{proof}
Suppose that $e(H)> f(n,m,3) -\epsilon^4 n^3$ and $ s:=\nu(H) < m$.
Let $M = \{(i_\ell,j_\ell,k_\ell): \ell\in [s]\}$ be a largest matching in $H$ and partition $V(M) = I\cup J\cup K$ such that every edge $(i,j,k)\in E(M)$ with $i<j<k$ satisfies $i\in I,j\in J$ and $k\in K.$
Since $H$ is stable, we may choose $V(M)$ to be $[3s]$.

Let $V'=[n]\backslash [3s]$. For $x\in [3s]$, let $e(x)$ denote the
edge in $M$  containing $x$. Let  $F_1 = \{\{v\}\in \binom{[3s]}{1}: d_{V'}(v)\ge
20n\}$, $F_2 = \{\{v,w\}\in \binom{[3s]}{2}: e(v)\ne e(w)$ and
$d_{V'}(v,w)\ge 20\}$, and $F_3 =\{ \{u,v,w\}\in \binom{[3s]}{3}:
e(u)$, $e(v)$ and $e(w)$ are pairwise distinct$\}$.
Let $H^* = ([3s],F)$ be the hypergraph with vertex set $[3s]$ and edge set $F=M \cup F_1\cup F_2 \cup F_3$.

Call an edge $e \in H$ \emph{traceable} if $e\cap [3s] \in
F$, and \emph{untraceable} otherwise. Since $M$ is a maximum matching in $H$,
$V'$ is independent in $H$. So
the number of untraceable edges of $H$ is bounded from above by
$$\binom{3s}{1}\cdot 20n+ \left(\binom{s}{2}\binom{3}{1}\binom{3}{1}\times 19 + \binom{s}{1}\binom{3}{2}n\right)+ \binom{s}{1}\binom{3}{2}\binom{3s-3}{1}\le 32n^2=o(n^3),$$
where we use $s<m\leq n/3$.
We point out that those edges (there being  $o(n^3)$ of them)  will be negligible in the following proof.
	
Let $T$ be a triple of edges from $M$. We say $T$ is \emph{bad} if $V (T)$ contains three pairwise disjoint edges of $H^*$ whose union intersects $I$ in at most $2$ vertices, and \emph{good} otherwise.
For each $i\in [3]$, let $f_i(T)$ denote the number of edges of $F_i$ contained in $V (T)$. Note that $f_3(T)\leq 27$.
The following two claims are explicit in \cite{LM14}.
\begin{claim}
There exist no three pairwise disjoint bad triples (of edges in $M$).
Hence, there exist at most six edges in $M$ such that each bad triple contains one of these edges.
\end{claim}
	
	\begin{claim}\label{claim 3}
		Let T be a good triple.\\
		(i) If $f_3(T) \ge 24$, then $f_1(T) = f_2(T)=0$.\\
		(ii) If $f_3(T) = 20$, then $f_1(T) \le 1$ and $f_2(T)\le 12$.\\
		(iii) If $f_3(T) \le 19$, then $f_1(T)\le 3$ and
                $f_2(T) \le 15$. Moreover, the only triples $T$ for
                which $f_3(T) =19$, $f_2(T) =15$, and $f_1(T)=3$, are those in which each edge of $H^*$ contained in $V(T)$ intersects I.\\
		(iv) If $f_3(T) = 21$, then $f_1(T) \le 1$ and $f_2(T)\le 10$\\
		(v) If $22\le f_3(T) \le 23 $, then $f_1(T) = 0$ and $f_2(T)\le 7$
	\end{claim}
	
We remove exactly six edges from $M$ such that the resulting matching $M'$ only contains good triples.
Since $H$ has at most $18n^2$ edges intersecting $V(M\backslash M')$ and $32n^2$ untraceable edges, we have
$$e(H) \le |F_1|\binom{n-3s}{2} + |F_2| (n-3s) +|F_3|+ 50n^2.$$
To bound $|F_i|$, let us consider the summation of $f_i(T)$ over all $T \in \binom{M'}{3}$.
Since each edge from $F_i$ is counted exactly $\binom{(s-6)-i}{3-i}$ times in this sum, we have $|F_i|\binom{(s-6)-i}{3-i}=\sum_{T\in \binom{M'}{3}} f_i(T)$.
Therefore,
\begin{align*}
e(H) &\le  \sum_{T\in \binom{M'}{3}}\left(f_1(T)\frac{\binom{n-3s}{2}}{\binom{s-7}{2}} +f_2(T) \frac{n-3s}{s-8} +f_3(T) \right) + 50n^2\\
&\le \sum_{T\in \binom{M'}{3}}\left(f_1(T)\frac{(n-3s)^2}{s^2} +f_2(T) \frac{n-3s}{s} +f_3(T) \right) + O(n^2).
\end{align*}
Here, the last inequality is trivial when $s\leq 15$, and it holds when $s>15$ because the difference between the above two summations is at most
\begin{align*}
\sum_{T\in \binom{M'}{3}}\left( f_1(T)\frac{15(n-3s)^2}{s(s^2-15s)} +f_2(T)\frac{8(n-3s)}{s(s-8)}  \right) \le  \binom{s-6}{3} \left(\frac{45(n-3s)^2}{s(s^2-15s)} +\frac{120(n-3s)}{s(s-8)}\right)= O(n^2),
\end{align*}
where $3s<n$, $f_1(T)\le3$, and $f_2(T) \le 15$ (from Claim~\ref{claim 3}).
	
To further bound $e(H)$, we partition good triples $T$ depending on $f_3(T)$ and $f_1(T)$.
Let $T_i = \{T\in \binom{M'}{3}: f_3(T) =i\}$ for $i\in [27]$ and $X = \{T\in \binom{M'}{3}: f_1(T) = 3\}$.
Consider any $T\in X$; so $T$ is a good triple.\footnote{Since $T$ is good, the union of any three disjoint edges of $H^*$ in $V(T)$ must contain the three vertices in $V(T)\cap I$.}
Since $f_1(T) = 3$, the three edges of $F_1$ contained in $V(T)$ are precisely the three vertices in $V(T)\cap I$,
and each edge of $H^*$ contained in $V(T)$ intersects $I$.
Since $H$ is stable and $V(M)=[3s]$, using the definition of $F_1$, it is not hard to see that $X\subseteq T_{19}$.
    	
Define $x_1 = \sum_{i=1}^{18}|T_i| + |T_{19} \setminus X|, x_2 =|T_{20}|, x_3=|T_{21}|, x_4 = |T_{22}|+|T_{23}|, x_5 = \sum_{i=24}^{26}|T_i|, x=|X|$, and $y= |T_{27}|$.
So $\sum_{i=1}^{5} x_i +x +y=\binom{s-6}{3}$.
From now on, we let $t=(n-3s)/s$.
By Claim \ref{claim 3} and the fact $X\subseteq  T_{19}$, we can derive from the above upper bound on $e(H)$ that
\begin{align*}
e(H)&\le (3x+2x_1+x_2+x_3)t^2+(15x+15x_1+12x_2+10x_3+7x_4)t\\
&+(19x+19x_1+20x_2+21x_3+23x_4+26x_5+27y)+O(n^2).
\end{align*}
For convenience, we write
\begin{align*}
& f_t(x_1,x_2,x_3,x_4,x_5,x,y) = \sum_{i=1}^{5}\alpha_i(t)\cdot x_i + \beta_1(t)\cdot x + \beta_2(t)\cdot y,\\
\mbox{ where ~~~} &\alpha_1(t) = 2t^2 +15t +19, ~~~~ \alpha_2(t) = t^2 +12t +20, ~~~~ \alpha_3(t) = t^2 +10t +21\\
&\alpha_4(t)= 7t +23, ~~~~ \alpha_5(t) = 26, ~~~~ \beta_1(t) = 3t^2 +15t +19, \mbox{ ~~and ~~} \beta_2(t) = 27.
\end{align*}
Then it follows that
$$e(H)\leq f_t(x_1,x_2,x_3,x_4,x_5,x,y)+O(n^2).$$

Next, we derive properties of the functions $\alpha_i(t)$ and $\beta_j(t)$.
\begin{claim}\label{claim_cal}
For any $t\geq 0$, $\max \{\beta_1(t),\beta_2(t)\} \geq \max \{\alpha_1(t),\alpha_2(t),\alpha_3(t),\alpha_4(t),\alpha_5(t)\} +0.2$.
\end{claim}
\pf
We have $\beta_2(t) = 27$.
It is easy to see that for each $i\in [5]$, the functions $\alpha_i(t),\beta_1(t) -\alpha_i(t)$ and $\beta_1(t)$ are increasing for $t\geq 0$.
Note that $\beta_1(0.5) = 27.25$, $\alpha_2(0.5) = 26.25$, $\alpha_3(0.5) = 26.25$ and $\alpha_4(0.5) = 26.5$;
so $\max\{\beta_1(t),27\}\ge \alpha_i(t) + 0.2$ for $t\geq 0$ and $i=2,3,4$.
Since $\beta_1(t) -\alpha_1(t) = t^2$, and $\alpha_1(\sqrt{0.2}) < 27-0.2$, we see $\max\{\beta_1(t),27\}\ge \alpha_1(t) + 0.2$ for all $t\geq 0$.
\qed
	
\medskip

Since $\beta_1(t)\binom{s-6}{3} \le \frac12(n-3s)^2 s + \frac52(n-3s)s^2+\frac{19}{6}s^3=\frac16n^3-\frac16(n-s)^3$,
we see $\max\{\beta_1(t), \beta_2(t)\} \binom{s-6}{3}\le \max\left\{\binom{n}{3} - \binom{n-s+1}{3},\binom{3s-1}{3}\right\} + O(n^2)=f(n,s,3)+ O(n^2).$	
By Claim~\ref{claim_cal} and the fact that $\sum_{i=1}^{5} x_i +x +y=\binom{s-6}{3}$, we have
\begin{align}\label{equ:f_t}
\begin{split}
&f_{t}(x_1,x_2,x_3,x_4,x_5,x,y)\le \bigg(\max\{\beta_1(t), \beta_2(t)\}-0.2\bigg)\sum_{i=1}^5 x_i + \beta_1(t)x + \beta_2(t)y\\
\le & \max\{\beta_1(t), \beta_2(t)\}\binom{s-6}{3}-0.2\sum_{i=1}^5x_i\leq f(n,s,3)-0.2\sum_{i=1}^5x_i+O(n^2).
\end{split}
\end{align}	

Let $\cup X$ (respectively, $\cup T_{27}$) denote the set of edges each of which belongs to some triple in $X$ (respectively, in $T_{27}$).
Now we show the following claim.
\begin{claim}\label{x,y}
$s> m- \epsilon n/4 $, and $x >\binom{s-6}{3} - 10\epsilon^4 n^3- \binom{\epsilon n/24}{3} $ or $y > \binom{s-6}{3} - 10\epsilon^4 n^3- \binom{\epsilon n/12}{3}$.
\end{claim}
\pf
If $s\le m-\epsilon n/4$, then by \eqref{equ:f_t} we have
\begin{align*}
e(H)&\leq f_{t}(x_1,x_2,x_3,x_4,x_5,x,y)+ O(n^2)\le f(n,s,3) + O(n^2)\\
&\le f(n,m,3) -\binom{\epsilon/4n}{3} +O(n^2)\le f(n,m,3) - \epsilon^4 n^3,
\end{align*}
a contradiction. So $s> m- \epsilon n/4 $.
First we see that $x+y> \binom{s-6}{3} - 10\epsilon^4 n^3$; for, otherwise, $\sum_{i=1}^5 x_i\geq 10\epsilon^4 n^3$, which together with \eqref{equ:f_t} implies
$$e(H)\le f_{t}(x_1,x_2,x_3,x_4,x_5,x,y)+O(n^2)\le f(n,m,3) - 2\epsilon^4n^3  + O(n^2)\le f(n,m,3) -\epsilon^4 n^3,$$ a contradiction.
Now suppose that $x > \binom{\epsilon n/12}{3}$ and $y > \binom{\epsilon n/24}{3}$.
Then $|\cup X|>\epsilon n/12$ and $|\cup T_{27}| > \epsilon n/24$.
For any edge $e = (i,j,k) \in \cup X$ with $i<j<k$, by the previous discussion, we have $i\in F_1$.
For any edge $e = (i,j,k) \in\cup T_{27}$ with $i<j<k$, by Claim~\ref{claim 3} we see $i \notin F_1$.
Thus $(\cup X) \cap (\cup T_{27}) = \emptyset$.
The triples $T = \{e_1,e_2,e_3\}$ with $e_1 \in \cup X$ and $e_2,e_3\in \cup T_{27}$ cannot satisfy both $f_3(T) =27$ and $f_1(T) =3$.
This shows $x+y <\binom{s-6}{3} -|\cup X|\binom{| \cup T_{27}|}{2} \le\binom{s-6}{3} -  \frac{\epsilon n}{12}\binom{\epsilon n/24}{2} $,
contradicting that $x+y > \binom{s-6}{3} - 10\epsilon^4 n^3$.
Hence, we have that either $x \le \binom{\epsilon n/12}{3}$ or $y \le \binom{\epsilon n/24}{3}$.\qed

\medskip

Suppose $x > \binom{s-6}{3} - 10\epsilon^4 n^3- \binom{\epsilon n/24}{3}$.
So $x > \binom{s-6}{3} - \binom{\epsilon n/12}{3}$ and thus $|\cup X| > s-6 - \epsilon n/12$.
Recall that for any $T\in X$, $T$ is a good triple and, hence, each edge of $H^*$ contained in $V(T)$ intersects $I$.
Hence any traceable edge which intersects $V(\cup X)$ must also intersect $I$.
Thus, the number of edges of $H$ not intersecting $I$ is at most $|V(M')\setminus V(\cup X)|\binom{n}{2}+ 50n^2\leq \frac{\epsilon n}{4}\binom{n}{2}+50n^2\leq \frac{\epsilon}{4}n^3$.
As $|I| = s\le m-1$, $$|E(S(n,m,3))\backslash E(H)| = |E(H)\backslash E(S(n,m,3))| +e(S(n,m,3))-e(H)\le \frac{\epsilon}{4}n^3 +\epsilon^4 n^3 < \epsilon n^3.$$
So in this case we see that $H$ is $\epsilon$-close to $S(n,m,3)$.
	
By Claim~\ref{x,y}, it remains to consider $y >\binom{s-6}{3} - 10\epsilon^4 n^3- \binom{\epsilon n/12}{3} $.
We claim that there exists a complete $3$-graph $K$ on more than $3m-3\epsilon n/2$ vertices and $V(K)\subseteq V(M')$.
Suppose to contrary that $V(M')$ does not contain such a complete $3$-graph $K$.
Since $|V(M')| - (3m-3\epsilon n/2)=3(s-6)-3m+3\epsilon n/2 > \frac{\epsilon n}{2}$ and $H$ is stable,
$V(M')$  contains an independent set of size $\frac{\epsilon n}{2}$, say $A$.
Note that if $T =\{e_1,e_2,e_3\}$ with $e_i \cap A \ne \emptyset$ for all $i\in [3]$, then $f_3(T) < 27$.
Since there are at least $|A|/3 \ge \epsilon n/6$ edges in $M'$ which intersect with $A$,
we see that $y \le \binom{s-6}{3} - \binom{\epsilon n/6}{3}$, a contradiction.

Then $|E(D(n,m,3)\backslash E(H)|\le|E(D(n,m,3)\backslash E(K)| \le \frac32\epsilon n\binom{n}{2}< \epsilon n^3$,
i.e., $H$ is $\epsilon$-close to $D(n,m,3)$.
This finishes the proof of Lemma~\ref{stability}.	
\end{proof}	

\section{Almost perfect rainbow matchings}

In this section, we prove a lemma about almost perfect rainbow
matchings that we will need. In fact, this result holds for families of
$k$-graphs, for any $k\ge 3$.

\begin{lem}\label{large-mat}
For any given integer $k\geq 3$, there exist positive reals $c$ and $n_2$ such that the following holds.
Let $n,m $ be integers with $n\geq km$ and $n\geq n_2$,
and let $\mathcal{F}=\{F_1,...,F_m\}$ be a stable family of $k$-graphs on the same vertex set $[n]$ such that $|F_i|> \binom{km-1}{k}$ for each $i\in [m]$.
If $m>(1-c)n/k$, then $\mathcal{F}$ admits a rainbow matching.
\end{lem}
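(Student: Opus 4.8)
The plan is to work with the hypergraph $H^*(\mathcal{F})$ on $[n]\cup\mathcal{V}\cup\mathcal{U}$, where $r=\lfloor n/k\rfloor-m$ is small (at most $cn/k$) because $m>(1-c)n/k$; recall that $\mathcal{F}$ admits a rainbow matching if and only if $H^*(\mathcal{F})$ has a perfect matching (a matching of size $m+r=\lfloor n/k\rfloor$). First I would invoke Theorem~\ref{n>k^2} applied to a much smaller target: since $|F_i|>\binom{km-1}{k}=f(km-1,m,k)\ge f(n,s,k)$ for a suitable $s=\Theta(n/k^2)$, each $F_i$ has a matching of size $s$, and stability then forces $F_i[[s]]$ to be a complete $k$-graph; this is exactly the mechanism used in the proof of Lemma~\ref{absorb}, giving us an ``absorbing'' matching $M$ of size at most $\gamma n$ in $H^*(\mathcal{F})$ together with the property that $H^*(\mathcal{F})[V(M)\cup S]$ has a perfect matching whenever $S$ is small and balanced (i.e. $k|S\cap(\mathcal{V}\cup\mathcal{U})|=|S\cap[n]|$). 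So it suffices to find an almost perfect matching in $H^*(\mathcal{F})-V(M)$ that misses only a small, balanced set of vertices, and moreover one that leaves all of $\mathcal{U}$ and enough of $\mathcal{V}$ covered so that the leftover set can be absorbed by $M$.

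The core of the argument is then to produce an almost perfect matching in $H:=H^*(\mathcal{F})-V(M)$. Here I would use the randomization machinery assembled in Section~2: apply the two-round randomization of Alon et al.\ (Lemma~\ref{spaning graph}) to pass to a spanning subgraph $H'$ with $d_{H'}(v)=(1+o(1))D$ for all $v$ and $\Delta_2(H')$ small, then apply the Frankl–R\"odl theorem (Theorem~\ref{regular}) to get a matching in $H'$ covering all but $\sigma|V(H)|$ vertices; choosing $\sigma$ small enough and $\gamma'$ in Lemma~\ref{absorb} appropriately, the uncovered set can be trimmed to a balanced set $S$ with $|S|\le\gamma'n$, and then $H^*(\mathcal{F})[V(M)\cup S]$ has a perfect matching, completing a perfect matching of $H^*(\mathcal{F})$. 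The hypotheses (a)--(d) of Lemma~\ref{spaning graph} need perfect fractional matchings on the random pieces $R^i$: these come for free in the very dense regime $m>(1-c)n/k$, because with $n$ close to $km$ the hypergraph $H^*(\mathcal{F})$ is essentially complete on most of its vertices (each $F_i$ contains a huge clique by the stability argument above, and $\mathcal{U}$-edges and most $\mathcal{V}$-edges are present), so the natural uniform weighting on a random subset is a near-perfect fractional matching after negligible correction.

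The step I expect to be the main obstacle is the bookkeeping that guarantees the \emph{leftover set is balanced and absorbable} — in particular that $\mathcal{U}$ is entirely covered and that the number of uncovered vertices of $\mathcal{V}$ is exactly $1/k$ times the number of uncovered vertices of $[n]$, so that Lemma~\ref{absorb}'s divisibility condition $k|S\cap(\mathcal{V}\cup\mathcal{U})|=|S\cap[n]|$ holds. Since the Frankl--R\"odl matching is oblivious to the partition $[n]\cup\mathcal{V}\cup\mathcal{U}$, I would handle this by a preliminary exact patch: use the completeness of $H^*(\mathcal{F})$ on $[n]\cup\mathcal{U}$ together with the cliques $F_i[[s]]$ to peel off a bounded-size matching that fixes the residues (covering all but a controlled, divisible remainder), then run the almost-perfect-matching argument on what remains; alternatively, absorb the parity/divisibility defect into $M$ itself by building $M$ slightly larger. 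A secondary technical point is verifying the degree conditions $(1-\tau)D<\Delta_1(H')<(1+\tau)D$ and $\Delta_2(H')<\tau D$ hold with the value of $D$ produced by the randomization, which is routine given that $H^*(\mathcal{F})$ is near-complete in this regime, but must be checked against the constants $\tau(k,\sigma),d_0(k,\sigma)$ from Theorem~\ref{regular}.
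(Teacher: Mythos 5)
Your proposal has a genuine gap: it tries to recycle the absorbing-plus-randomization machinery (Lemma~\ref{absorb}, Lemma~\ref{spaning graph}, Theorem~\ref{regular}) precisely in the regime where that machinery is unavailable. The absorbing matching of Lemma~\ref{absorb} is built out of vertices $u_1,\dots,u_t\in\mathcal{U}$ and requires $\gamma' n<t<\min\{\gamma n, r\}$, so it needs $r=\lfloor n/k\rfloor-m$ to be at least $\gamma' n=\Omega(n)$. But Lemma~\ref{large-mat} must handle \emph{all} $m$ with $(1-c)n/k<m\le n/k$, including $m$ arbitrarily close to $n/k$ (and $m=n/k$ when $k\mid n$, where $r=0$ and $\mathcal{U}$ is empty). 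In that range there is simply no room to place the absorbing matching, so your first step already fails. This is not a bookkeeping issue about balancing the leftover set --- it is structural, and in fact it is the whole reason Lemma~\ref{large-mat} is a separate statement: the rest of the paper uses absorbing + randomization only for $m\le(1-c)n/k$, and this lemma is there to cover the complementary dense regime where that approach does not apply. Your suggested fix of "building $M$ slightly larger" does not help, since the bottleneck is the supply of $\mathcal{U}$ vertices, not the size budget.

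The paper's actual proof is a direct extremal argument that avoids randomization entirely. One passes to a stable \emph{and saturated} family via Corollary~\ref{coro:stable-saturated}, lets $U_i=[|U_i|]$ be the largest clique in $F_i$, and shows by a counting argument (using $|F_i|>\binom{km-1}{k}$ and $m>(1-c)n/k$) that $|U_i|>(1-c')km$ for a suitably chosen $c'\gg c$. If $U=\bigcap_i U_i$ has size at least $km$, a rainbow matching is immediate from the common clique. Otherwise, say $U=U_m\subseteq[km-1]$; saturation produces a rainbow matching $M$ of $\mathcal F\setminus\{F_m\}$ together with a non-edge $e\notin F_m$ with $|e\cap U|=k-1$ and $km\in e$, and one analyzes the edges $M'\subseteq M$ not contained in $U$ to bound $|F_m|$: only few edges of $M$ leave $U$, every edge of $F_m$ either sits inside $U$ or hits $V(M')$, and vertices of $V(M)\setminus U$ have small degree into $U$. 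Combining these gives $|F_m|<\binom{km-1}{k}$, a contradiction. This is conceptually quite different from your approach and is essentially forced by the absence of slack in $\mathcal{U}$.
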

\begin{proof}
We choose $c' = c'(k)$ and $c = c(k)$ small enough such that $0<c\ll c' \ll 1$.
Let $n$ be sufficiently large and $n/k\geq m>(1-c)n/k$.
Suppose to the contrary that $|F_i|> \binom{km-1}{k}$ for each $i\in [m]$ and $\mathcal{F}$ does not admit a rainbow matching.

By Corollary~\ref{coro:stable-saturated}, we may additionally assume
$\mathcal{F}$ is saturated.
Let $U_i$ be the vertex set of a largest complete $k$-graph in $F_i$ for $i\in [m]$.
Since $F_i$ is stable, we may choose $U_i = [|U_i|]$ such that $[n]\setminus U_i$ is an independent set in $F_i$.
For each $i\in [m]$, we have $|U_i|>(1-c')km$, for, otherwise, we have the following contradiction for some $i \in [m]$:
\begin{align*}
|F_i|\le \binom{n}{k} - \binom{c'km}{k}\le \binom{n}{k} - (cn+1)\binom{n-1}{k-1}\le \binom{n}{k} - (n-km+1)\binom{n-1}{k-1}<\binom{km-1}{k},
\end{align*}
where the second inequality holds since $c\ll c'\ll 1$ and $m>(1-c)n/k$, the third
inequality holds since $n-km<cn$,
and the last inequality holds since $\binom{n}{k} -\binom{km-1}{k} = \sum_{i=1}^{n-km+1} \binom{n-i}{k-1} < (n-km+1)\binom{n-1}{k-1}$.

Let $U = \bigcap_{i=1}^m U_i$. By the above paragraph, we see that $|U|\ge (1-c')km$.
If $|U|\ge km$, then it is clear that $\mathcal{F}$ admits a rainbow matching.
So we may assume that $U_m= U\subseteq [km-1]$.
Because $U_m$ is the vertex set of a largest complete $k$-subgraph of
$F_m$ and since $F_m$ is stable and $|F_m|>{km-1\choose k}$,
there exists some $k$-set $e\notin F_m$ such that $|e\cap U|=k-1$ and $km \in e$.
Since $\mathcal{F}$ is saturated, there exists a rainbow matching $M$ in $\mathcal{F}\setminus F_m$ such that
$M\cup \{e\}$ is a rainbow matching in ${\cal F}(e,F_m)$.
Since $F_i$ is stable for each $i\in [m]$, we may assume that $V(M)\cup e= [km]$.
Let $M' = \{e'\in M: e' \not\subseteq U\}$. 

\medskip
\noindent\textbf{Claim.}
	(a) $|M'| < c'km$,\\
	(b) Each edge of $F_m$ is contained in $U$ or intersects an edge of $M'$, and \\
	(c) For any $v\in V(M)\setminus U$, $d_{F_m[U]}(v)\leq  c'k^2m\binom{|U|}{k-2}$.

\medskip

\pf
To prove (a), just observe that $|M'|\le |V(M)\setminus U| = (km-1)-|U|< c'km$.
	
Suppose (b) fails. That is, there exists an edge $f\in F_m$ such that
$f\backslash U\neq \emptyset$ and $f\cap V(M')=\emptyset$. Note  that
$f\cap (U\backslash V(M'))\neq \emptyset$, as $[n]\setminus U$ is
independent in $F_m$.
In particular, $|f\cap (U\backslash V(M'))|\le k-1$.
Let $|M'|=m-t$ for some $t\geq 1$. Recall that $U\cup V(M') = V(M) = [km-1]$. Hence $|U\backslash V(M')|=kt-1$
and, thus, $U\backslash (V(M')\cup f)$ induces a common complete $k$-graph of size at least $k(t-1)$ in all $F_i$.
Then we see that $M'\cup \{f\}$ together with a matching of size $t-1$ in $U\backslash (V(M')\cup f)$ form a rainbow matching for $\mathcal{F}$.
So $(b)$ holds.
	
Now we prove (c). For any $v\in V(M)\setminus U\subseteq [km]$, by the maximality of $U$,
there exists $f\in {[n]\choose k}\setminus F_m$ such that $v\in f$ and $|f\cap U| = k-1 $.
So there exists a rainbow matching $N$ in $\mathcal{F}\setminus F_m$
such that $N\cup \{f\}$ is a rainbow matching in ${\cal F}'(f,F_m)$.
Since $F_i$ is stable for $i\in [m]$, we may assume that $V(N)\cup f = [km]$.
Let $N' = \{e'\in N: e'\not\subseteq U\}$. By applying (b) to $N'$,
every edge of $F_m$ containing $v$ intersects $V(N')$.
Since $V(N') \le k|N'|\le k(km-|U|)\le c'k^2m$, there are at most $c'k^2m\binom{|U|}{k-2}$ edges $e'$ in $F_m$ containing $v$ such that $e' \subseteq U\cup \{v\}$.
Hence (c) holds. This proves the claim.\qed
		
\medskip

Note that $|e\cap U|=k-1$ and $V(M) \cup U= [km-1]$.
Let $q_1$ be the number of edges of $F_m$ contained in $[km-1]$, and
$q_2$ be the number of edges of $F_m$ with at least one vertex in $[n]\setminus [km-1]$.
By (c), we have $$q_1\le \binom{km-1}{k} - |V(M)\setminus U|\binom{|U|}{k-1} + |V(M)\setminus U|\cdot c'k^2m\binom{|U|}{k-2}.$$
By (b), we see $q_2 \le  \left|V(M')\right|\cdot (n-km+1)\binom{n-2}{k-2}$.
So  we have
\begin{align*}
|F_m| &\le \binom{km-1}{k} - |V(M)\setminus U| \left[\binom{|U|}{k-1} + c'k^2m\binom{|U|}{k-2}\right] + \left|V(M')\right| (n-km+1)\binom{n-2}{k-2}\\
&\le \binom{km-1}{k} -|V(M)\setminus U| \left[\binom{|U|}{k-1} +c'k^2m\binom{|U|}{k-2}\right]+k|V(M)\setminus U|(cn+1)\binom{n-2}{k-2}\\
&= \binom{km-1}{k} - |V(M)\setminus U|\cdot \left[\binom{|U|}{k-1}
  -c'k^2m\binom{|U|}{k-2}-k(cn+1)\binom{n-2}{k-2} \right]\\
& <\binom{km-1}{k},
\end{align*}
where the second inequality holds since $n-km<cn$ and $|M'|\le |V(M)\setminus U|$, and
the last inequality holds since $c',c$ are small enough and $|U| >(1-c')km>(1-c')(1-c)n$.
This is a contradiction, finishing the proof of Lemma~\ref{large-mat}.
\end{proof}

\section{Non-extremal configurations}

Note that if there exist $F\in
{\cal F}$ and $v\in [n]$ such that $d_F(v)={n-1\choose k-1}$ then $v$
can be removed from all $k$-graphs in ${\cal F}\setminus
\{F\}$ to obtain a smaller family ${\cal F}'$ so that ${\cal F'}$
admits a rainbow matching if and only if ${\cal F}$ admits a rainbow
matching. Hence, if such vertex does not exist in a saturated family
${\cal F}$, then
from  Lemma~\ref{claim:Delta}, we see that  $d_{F}(v)\le
\binom{n-1}{k-1} - \binom{n-k(m-1)-1}{k-1}$ for all $v\in F$ and $F\in
{\cal F}$. This leads us to the following result.

\begin{lem}\label{lem:not close}
Given reals $0<\epsilon \ll c\ll 1$, let $n\geq n(\epsilon, c)$ be a sufficiently large integer and $m$ be an integer such that $n/27<m<(1-c)n/3$.
Let $\mathcal{F}= \{F_1, ... ,F_m\}$ be a stable family of $3$-graphs on vertex set $[n]$ such that for every $i\in [m]$, $|F_i|> f(n,m,3)$ and $d_{F_i}(v)\le  \binom{n-1}{2} - \binom{n-3(m-1)-1}{2}$  for each $v\in [n]$.
If $H(\mathcal{F})$ is $\epsilon$-close to neither $H_S(n,m,3)$ nor $H_D(n,m,3)$, then $\mathcal{F}$ admits a rainbow matching.
\end{lem}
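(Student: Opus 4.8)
The plan is to convert the problem to finding a perfect matching of size $m+r$ in $H^*(\mathcal{F})$, and then to run the standard absorbing-plus-nibble machinery. Recall $r=\lfloor n/3\rfloor-m$, so $H^*(\mathcal{F})$ has $N:=n+m+r=n+\lfloor n/3\rfloor$ vertices (up to rounding), and a perfect matching of $H^*(\mathcal{F})$ corresponds exactly to a rainbow matching of $\mathcal{F}$ together with a matching covering the $\mathcal{U}$-vertices and the leftover $[n]$-vertices. First I would invoke Lemma~\ref{absorb} (valid here since $n/27<m<(1-c)n/3\le(1-c)n/k$ with $k=3$ and $\mathcal{F}$ is stable with $|F_i|>f(n,m,3)$) to obtain an absorbing matching $M$ in $H^*(\mathcal{F})$ with $|M|\le\gamma n$ that can absorb any balanced set $S$ of size at most $\gamma' n$ with $3|S\cap(\mathcal{V}\cup\mathcal{U})|=|S\cap[n]|$. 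It then suffices to find in $H':=H^*(\mathcal{F})-V(M)$ a matching covering all but at most $\gamma' n$ vertices, in a way that the uncovered set is balanced in the above sense; after absorbing those uncovered vertices into $M$ we get a perfect matching of $H^*(\mathcal{F})$, hence a rainbow matching of $\mathcal{F}$.

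To find the almost-perfect matching in $H'$ I would follow Alon--Frankl--Huang--R\"odl--Ruci\'nski--Sudakov: do two rounds of randomization. The first round takes many random subsets $R^i\subseteq V(H')$, $i=1,\dots,n^{1.1}$, each obtained by including each vertex independently with probability about $n^{-0.9}$; Chernoff (Lemma~\ref{chernoff}) gives that with high probability properties (a)--(c) of Lemma~\ref{spaning graph} hold, and the crucial property (d) — that each $R^i$ spans a perfect fractional matching — is exactly where the hypothesis that $H(\mathcal{F})$ is $\epsilon$-close to neither $H_S(n,m,3)$ nor $H_D(n,m,3)$ enters: one uses the degree bound $d_{F_i}(v)\le\binom{n-1}{2}-\binom{n-3(m-1)-1}{2}$ (so, via Lemma~\ref{lemma:H_S(n,m)}, $H(\mathcal{F})$ is automatically far from $H_S$) and the non-closeness to $H_D$ to argue, via Lemma~\ref{stability} applied to appropriate subgraphs, that $H^*(\mathcal{F})$ has no large ``sparse'' vertex set, so that every not-too-small induced subgraph still has a perfect (or near-perfect) fractional matching; together with Lemma~\ref{subgrpah indenpent set} this controls the independence number of $H'[R^i]$ and yields the fractional matching on each $R^i$ with the correct balance. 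Lemma~\ref{spaning graph} then produces a spanning subgraph $H''\subseteq H'$ with $d_{H''}(v)=(1+o(1))n^{0.2}$ for all $v$ and $\Delta_2(H'')\le n^{0.1}$.

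With $H''$ in hand, the second step is a direct application of the Frankl--R\"odl theorem (Theorem~\ref{regular}) with $D=(1+o(1))n^{0.2}$ and $\sigma=\gamma'/2$, say: since $(1-\tau)D<\Delta_1(H'')<(1+\tau)D$ and $\Delta_2(H'')<\tau D$ for large $n$, $H''$ — hence $H'$ — has a matching covering all but at most $\gamma' n/2$ vertices. A short argument (using that the construction of $H''$ treats $\mathcal{V}$-, $\mathcal{U}$-, and $[n]$-vertices symmetrically enough, or by a minor adjustment discarding $O(1)$ edges) ensures the uncovered set is balanced; then the absorbing property of $M$ finishes the proof. The main obstacle is the verification of hypothesis (d) of Lemma~\ref{spaning graph}, i.e.\ turning the qualitative ``$H(\mathcal{F})$ is $\epsilon$-close to neither extremal configuration'' into the quantitative statement that every large induced subgraph of $H^*(\mathcal{F})-V(M)$ carries a balanced perfect fractional matching; this is where the stability lemma of Section~4 does the real work, and where the degree hypothesis on the $F_i$ must be used to rule out the $S(n,m,3)$-type obstruction. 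Everything else is bookkeeping with Chernoff bounds and the quoted theorems.
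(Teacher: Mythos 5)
Your proposal follows exactly the paper's route: apply Lemma~\ref{absorb} to extract an absorbing matching, run the two-round randomization of \cite{AFH12} on $H^*(\mathcal{F})-V(M_a)$, use Lemma~\ref{stability} together with Lemma~\ref{subgrpah indenpent set} and the degree hypothesis to verify that the restricted $3$-graphs $F_j$ on the random slices carry the fractional matchings needed for hypothesis (d) of Lemma~\ref{spaning graph}, then invoke Theorem~\ref{regular} and absorb the balanced remainder. You correctly identify the fractional-matching step as the crux and correctly locate where the stability lemma and the degree condition enter, so this is essentially the paper's argument in sketch form.
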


\begin{proof} Given $0<\epsilon \ll c\ll 1$, let $n', m'$ be integers such that $n'$ is sufficiently large and $n'/27<m'<(1-c)n'/3$.
Let $\mathcal{F}= \{F_1, ... ,F_{m'}\}$ be a family of $3$-graphs on the
vertex set $[n']$ such that $|F_i|>f(n',m',3)$ and $d_{F_i}(v)\le
\binom{n'-1}{2} - \binom{n'-1-3(m'-1)}{2}$ for $i\in [m']$ and
$v\in [n']$.
Suppose that $H(\mathcal{F})$ is not $\epsilon$-close to $H_S(n',m',3)$ or $H_D(n',m',3)$.
Our ultimate goal is to find a rainbow matching in $\mathcal{F}$.

Let $n' =3m'+3r'+s$ where $0 \le s <3$.
Recall the definitions of $H(\mathcal{F})$ and $H^{*}(\mathcal{F})$ such that $V(H(\mathcal{F}))=[n']\cup \mathcal{V}'$ and $V(H^*(\mathcal{F}))=[n']\cup \mathcal{V}'\cup \mathcal{U}'$, where $|\mathcal{V}'|=m'$ and $|\mathcal{U}'|=r'$.
By Lemma~\ref{absorb}, for $ 0<\gamma'\ll \gamma \ll \epsilon \ll c \ll1 $, there exists a matching $M_a$ in $H^*(\mathcal{F})$ with $|M_a| \le \gamma n'$
such that for any $S\subseteq  V(H^{*}(\mathcal{F}))\backslash V(M_a)$ with $|S|\le \gamma' n'$ and $3|S\cap (\mathcal{V}'\cup \mathcal{U}')| = |S\cap [n']|$, $H^{*}(\mathcal{F})[V(M_a)\cup S]$ has a perfect matching.
In the rest of the proof, without loss of generality, we use the following notation:
$$H =H^{*}(\mathcal{F})-V(M_a), [n] = [n']\setminus V(M_a),
\mathcal{V}=\mathcal{V}'\setminus V(M_a)= \{v_1,...,v_m\}, \mathcal{U}=\mathcal{U}'\setminus V(M_a)=\{u_1,...,u_r\}.$$

Then $n= 3m+3r+s$.
Using the above property of the matching $M_a$, it now suffices for us
to find an almost perfect matching in $H$.
To find this almost perfect matching, our plan is to show that there exists an almost regular subgraph of $H$ with bounded maximum co-degree so that Theorem~\ref{regular} can be applied.
To that end, in what follows we will use the two-round randomization technique developed in \cite{AFH12}.

Let $R$ be chosen from $V(H)$ by taking each vertex independently with probability $n^{-0.9}$.
We take $n^{1.1}$ independent copies of $R$ and denote them by $R^i$ for $1\le i\le n^{1.1}$.
For $S\subseteq V(H)$, denote $Y_S = |\{ i:S\subseteq R^i\}|$.
First we have the following claim.		

\medskip

\noindent\textbf{Claim A.}
With probability $1-o(1)$, the following hold:\\
(i) for every $v\in V(H)$, $Y_{\{v\}} = (1+o(1)) n^{0.2}$,\\
(ii) every pair $\{u,v\}\subseteq V(H)$ is contained in at most two sets $R^i$, and\\
(iii) every edge $e\in H$ is contained in at most one set $R^i$.

\medskip
\pf
Note that $Y_S\sim \bin(n^{1.1},n^{-0.9|S|})$ for any $S\subseteq V(H)$.
Thus, $\mathbb{E}[Y_{\{v\}}]= n^{0.2}$ for every $v\in V(H)$. By Lemma~\ref{chernoff}~\eqref{neq:2}, we have
$P(|Y_{\{v\}}-n^{0.2}|>n^{0.15}) \le e^{-\Omega(n^{0.1})}$. By union bound, we see $(i)$ holds.
To prove $(ii)$ and $(iii)$, let
$$Z_2 = \left| \left\{ \{u,v\} \in \binom{V(H)}{2}: Y_{\{u,v\}}\ge 3  \right\} \right| \mbox{ and } Z_3 = \left| \left\{ S \in \binom{V(H)}{3}: Y_S\ge 2  \right\} \right|.$$
Then $\mathbb{E}[Z_2]=\binom{|V(H)|}{2}P(Y_{\{u,v\}}\geq 3)\leq \binom{n}{2}(n^{1.1})^{3}(n^{-1.8})^3\leq 4n^{-0.1}$ and $\mathbb{E}[Z_3]\leq \binom{n}{3}(n^{1.1})^2(n^{-2.7})^{2}\leq 8n^{-0.2}$.
By Markov's inequality, we have
\begin{align*}
\mathbb{P}(Z_2  = 0) >1-4n^{-0.1} \mbox{ ~~and~~ } \mathbb{P}(Z_3= 0) >1-8n^{-0.2}.
\end{align*}
That implies that $(ii)$ and $(iii)$ hold with probability at least $1-4n^{-0.1}$ and $1-8n^{-0.2}$, respectively.
\qed
	
\medskip

Next we want to prove that there exists a  perfect  (or, rather,
maximum) fractional matching in each $H[R^i]$.
To do so, we define a maximal subset $R'^i\subseteq R^i$ that satisfies $R'^i\cap [n] = 3\lvert R'^i\cap (\mathcal{V}\cup \mathcal{U})) \rvert$ as follows.
If $\lvert R^i\cap [n]\rvert\geq 3\lvert R^i\cap (\mathcal{V}\cup \mathcal{U})) \rvert $, we take a subset of $R^i$ denote by $R'^i$, which is chosen from $R^i$ by deleting $\lvert R^i\cap [n]\rvert - 3\lvert R^i\cap (\mathcal{V}\cup \mathcal{U}))\rvert $ vertices in $R^i\cap [n]$ independently and uniformly at random.
Otherwise $\lvert R^i\cap [n]\rvert  < 3\lvert R^i\cap (\mathcal{V}\cup \mathcal{U}))\rvert$,
we take a subset of $R^i$ denote by $R'^i$ by the following two step:
First we delete at most $3$ vertices (chosen independently and uniformly at random) in $R^i\cap [n]$ so that the number $\ell$ of the remaining vertices is a multiple of 3;
then we delete $\lvert R^i\cap (\mathcal{V}\cup \mathcal{U}))\rvert -\ell/3$ vertices in $R^i\cap (\mathcal{V}\cup \mathcal{U}))$ independently and uniformly at random.

For $S\subseteq V(H)$,
define $Y'_S = \lvert \{ i:S\subseteq R'^i\}\rvert$.
Note that $\mathbb{E}(R^i\cap [n]) = n^{0.1}$, $\mathbb{E}(R^i\cap (\mathcal{V}\cup \mathcal{U})) = n^{0.1}/3$, and $\mathbb{E}(R^i\cap \mathcal{V}) = n^{-0.9}m$.
For each $i$, let $A_i$ be the event $ \big| |R^i\cap [n]|  -n^{0.1} \big| < n^{0.095} $, $B_i$ be the event $  \big| |R^i\cap (\mathcal{V}\cup \mathcal{U}))|  -n^{0.1}/3 \big| < n^{0.095}$, and $C_i$ be the event $\big| |R^i\cap \mathcal{V}|  -n^{-0.9}m \big| < n^{0.095}$.

\medskip

\noindent\textbf{Claim B.}
With probability $1-o(1)$, the following hold:\\
(i) $\bigwedge_i (A_i\wedge B_i \wedge C_i)$ holds,\\
(ii) for every $v\in V(H)$, $Y'_{\{v\}} = (1+o(1))n^{0.2}$,\\
(iii) every pair $\{u,v\}\subseteq V(H)$ is contained in at most two sets $R'^i$, and\\
(iv) every edge $e\in H$ is contained in at most one set $R'^i$.
\medskip

\pf
Since $R'^i \subseteq R^i$, it is clear from Claim A that (iii) and (iv) hold with probability $1-o(1)$.
Next we consider (i).
By Lemma~\ref{chernoff}~\eqref{neq:2} (with $\lambda= n^{0.095} $), for each $1\le i \le n^{1.1}$, we have
\begin{align*}
\mathbb{P}(\overline{A_i} ) \le e^{-\Omega (n^{0.09})}~, ~ \mathbb{P}(\overline{B_i} ) \le e^{-\Omega (3n^{0.09})} =e^{-\Omega (n^{0.09})}
\mbox{~ and ~} \mathbb{P}(\overline{C_i} ) \le e^{-\Omega (\frac{n}{m}n^{0.09} )} =e^{-\Omega (n^{0.09})}.
\end{align*}
Thus by union bound, $\mathbb{P}(\bigwedge_i (A_i\wedge B_i \wedge C_i)) = 1- o(1)$, proving $(i)$.
	
Assuming $A_i\wedge B_i \wedge C_i$, we see $|R^i\setminus R'^i|< \max\{\lvert R^i\cap [n]\rvert - 3\lvert R^i\cap (\mathcal{V}\cup \mathcal{U}))\rvert,\lvert R^i\cap (\mathcal{V}\cup \mathcal{U}))\rvert - \lfloor\lvert R^i\cap [n]\rvert/3 \rfloor +3 \} < 4n^{0.095}$.
Then by the choice of $R'^i$, for all $v\in V(H)$,
the probability $\mathbb{P}(\{v\in R^i\setminus R'^i \big| (A_i\land B_i\land C_i)\land (v\in R^i)\})$ is at most
\begin{align*}
\max \left\{\frac{|R^i\setminus R'^i|}{|R^i\cap[n]|},\frac{|R^i\setminus R'^i|}{|R^i\cap(\mathcal{V}\cup \mathcal{U}))|}\right\}\le \frac{|R^i\setminus R'^i|}{|R^i\cap(\mathcal{V}\cup \mathcal{U}))|}  <\frac{4n^{0.095}}{n^{0.1}/3-n^{0.095}} < 13 n^{-0.005}.
\end{align*}
Using coupling and applying Lemma~\ref{chernoff}~\eqref{neq:2} to $\bin(|Y_v|,13n^{-0.005})$ with $\lambda=3n^{0.195}$, we have
\begin{align*}
\mathbb{P}\left(\left\{Y_{\{v\}} - Y'_{\{v\}} > 16n^{0.195} \bigg| \bigwedge_i (A_i\wedge B_i \wedge C_i) \wedge \big(Y_{\{v\}}  = (1+o(1)) n^{0.2}\big) \right\}\right) \le e^{- \Omega(n^{0.195})}.
\end{align*}
Note that with probability $1-o(1)$, $\bigwedge_i (A_i\wedge B_i \wedge C_i)$ and $Y_{\{v\}}  = (1+o(1)) n^{0.2}$ hold for all $v\in V(H)$.
By union bound, we can derive that $0\leq Y_{\{v\}} - Y'_{\{v\}} \le 16n^{0.195} = o(n^{0.2})$ for all $v\in V(H)$ with probability $1-o(1)$.
Hence (ii) holds with probability $1-o(1)$. This proves Claim B.\qed

\medskip
	
Let $n_i = \lvert R'^i\cap [n]\rvert$ and $m_i = \lvert R'^i\cap \mathcal{V}\rvert$.
Using (i) of Claim B, we see that with probability $1-o(1)$, $m_i= (1+o(1))mn^{-0.9} =\Theta(n^{0.1})= \Theta(n_i)$ for all $1\leq i\leq n^{1.1}$.

\medskip

\noindent\textbf{Claim C.} 	
With probability $1-o(1)$, the following hold for all $1\le i\le n^{1.1}$:\\
(a) $H[R'^i \setminus \mathcal{U}]$ is not $\epsilon^4/4$-close to $H_S(n_i,m_i,3)$ or $H_D(n_i,m_i,3)$, and\\
(b) there exists a perfect fractional matching in $H[R'^i]$.
\medskip

\pf
For each $T\in \binom{V(H)}{\le 2}$, let $\Deg^i(T):= |N_H(T) \cap \binom{R'^i}{4-|T|}|$.
By definition of $H$, we have that
\begin{itemize}
\item for any $v_j\in \mathcal{V}$, $d_H(v_j)\ge f(n',m',3) - (\gamma n') \binom{n'}{2} \ge f(n,m,3)  - \gamma n^3$, and
\item for any $T = \{v_j,u\}$ with $v_j\in \mathcal{V}$ and $u\in [n]$, $$d_H(T)=d_{F_j}(u) \le \binom{n'-1}{2} - \binom{n'-1-3(m'-1)}{2} \le\binom{n-1}{2} - \binom{n-1-3(m-1)}{2} +\gamma n^2.$$
\end{itemize}
	
Assume that $\bigwedge_i (A_i\wedge B_i \wedge C_i)$ holds.
Then $n_i = (1+o(1)) n^{0.1}$ and $m_i = (1+o(1))mn^{-0.9}$.
Since $R^i\setminus R'^i = o(n_i)$, for each $T\in \binom{V(R'^i)}{t}$ with $t\in[2]$, we have
	\begin{align*}
		\mathbb{E}[\Deg^i(T)] = (1+o(1))d_H(T)(n^{-0.9})^{4-t}.
	\end{align*}
Thus, for any $v\in \mathcal{V}\cap R^i$,
\begin{align*}
\mathbb{E}[\Deg^i(v)]\ge (1+o(1)) (f(n,m,3) -\gamma n^3)(n^{-0.9})^3 \ge f(n_i, m_i, 3) -2\gamma n_i^3,
\end{align*}
and, for any $T= \{u,v\}$ with $v\in \mathcal{V}$ and $u\in [n]$, $\mathbb{E}[\Deg^i(T)]$ is at most
\begin{align*}
(1+o(1))\left[\binom{n-1}{2} - \binom{n-1-3(m-1)}{2} +\gamma n^2\right](n^{-0.9})^2\le \binom{n_i-1}{2} - \binom{n_i-1-3(m_i-1)}{2} +2\gamma n_i^2.
\end{align*}
We apply Janson's Inequality (Theorem 8.7.2 in  \cite{pr_book}) to bound the deviation of $\Deg^i(T)$ for $|T|\leq 2$.
Write $\Deg^i(T) = \sum_{e\in N_H(T)}X_e$, where $X_e = 1$ if $e\subseteq R'^i$ and $X_e = 0$ otherwise.
Let $t = |T|\in \{1,2\}$ and $p= n^{-0.9}$.
Then
\begin{align*}
\Delta^* &=\sum_{e_i\cap e_j \ne \emptyset, ~e_i,e_j\in \binom{V(H)}{4-t}} \mathbb{P}(X_{e_i} = X_{e_j} = 1) \le \sum_{\ell=1}^{4-t} p^{2(4-t)-\ell}\binom{n-t}{4-t}\binom{4-t}{\ell}\binom{n-4}{4-t-\ell}=O(n^{0.1(2(4-t) -1)}).
\end{align*}
By Janson's inequality, for $v\in \mathcal{V}\cap R^i$,
\begin{align*}
\mathbb{P}(\Deg^i(v) \le (1-\gamma)\mathbb{E}[\Deg^i(v)]) \le e^{-\gamma^2 \mathbb{E}[\Deg^i(v)]/(2+\Delta^* /\mathbb{E}[\Deg^i(v)]) }
\le e^{-\Omega (n^{0.3}/(2+n^{0.5}/n^{0.3}))}
= e^{-\Omega(n^{0.1})},
\end{align*}
and, for the pair $\{v,u\}$ with $v\in \mathcal{V}$ and $u\in [n]$ (by considering the complement of $H$), we can have
	\begin{align*}
	\mathbb{P}(\Deg^i(\{v,u\}) \ge (1+\gamma)\mathbb{E}[\Deg^i(\{v,u\})])\le e^{-\Omega(n^{0.1})}
	\end{align*}
By union bound, with probability $1- o(1)$ we derive from above that for all $1\le i \le n^{1.1}$
\begin{itemize}
\item [1).] for any $v\in \mathcal{V} \cap R^i$, $\Deg^i(v)\ge (1-\gamma)\mathbb{E}[\Deg^i(v)])\geq f(n_i, m_i, 3) -3\gamma n_i^3$, and
\item [2).] for any pair $\{u,v_j\} \subseteq R'^i$ with $v_j\in \mathcal{V}$ and $u\in [n]$, $$\Deg^i(\{u,v_j\}) \le \binom{n_i-1}{2} - \binom{n_i-1-3(m_i-1)}{2} +3\gamma n_i^2\leq \binom{n_i-1}{2}-\Omega(n_i^2),$$
which implies that $F_j[R'^i\cap [n]]$ is not $\epsilon^3/2$-close to $S(n_i,m_i,3)$, since $m_i = (1+o(1))mn^{0.9}$ and $m<(1-c)n/3$.
\end{itemize}	
This shows that $H[R'^i \setminus \mathcal{U}]$ is not $\epsilon^4/4$-close to $H_S(n_i,m_i,3)$, where $\gamma \ll \epsilon$.

Let $\mathcal{V}_0:= \{ v_i\in \mathcal{V} : ~F_i[[n]]$  is not $\epsilon$-close to $D(n,m,3)\}$.
We claim that $|\mathcal{V}_0|> \epsilon n$.	
Otherwise $|\mathcal{V}_0| \le \epsilon n$, then we have
\begin{align*}
|E(H_D(n',m',3))\setminus E(H(\mathcal{F}))|\le \epsilon n \binom{n}{3} + (m-\epsilon n) \epsilon n^3 + \gamma (n')^4\le \epsilon (n')^4,
\end{align*}
a contradiction as $H(\mathcal{F})$ is not $\epsilon$-close to $H_D(n',m',3))$.
As $|\mathcal{V}_0|> \epsilon n$, with probability $1-o(1)$ we have (using Lemma~\ref{chernoff}) that
\begin{itemize}
\item [3).] $|R'^i \cap  \mathcal{V}_0| \ge \frac{\epsilon n_i}{2}$ for all $1\le i\le n^{1.1}$.
\end{itemize}
	
For $v_j\in R'^i \cap  \mathcal{V}_0$, we consider $F_j[[n]]$.
Let $G$ be the complement of $F_j[[n]]$.
Then for any $S\subseteq V(G)$ with $|S|> 3m-\epsilon n$, we have $e(G[S])\ge \epsilon e(G)$.
Since otherwise $|E(D(n,m,3))\backslash E(F_j[[n]])|\le \epsilon n \binom{n}{2} +\epsilon e(G) < \epsilon n^3$, contradicting $v_j\in \mathcal{V}_0$.
By Lemma~\ref{subgrpah indenpent set}, the maximum size of the complete $3$-graph in $F_j[R^i \cap [n]]$ is no more than
$(3m/n-\epsilon +\gamma)n^{0.1}\leq 3m_i-\epsilon n_i/2$ with probability at least $1-(n^{O(1)}e^{-\Omega(n^{0.1})})$.
Assuming $\bigwedge_i (A_i\wedge B_i \wedge C_i)$, this implies that $F_j[R'^i\cap [n]]$ is not $\epsilon^3/2$-close to $D(n_i,m_i,3)$.
By union bound, with probability $1-o(1)$, we have
\begin{itemize}
\item [4).] for all $1\le i\le n^{1.1}$ and $v_j\in R'^i \cap  \mathcal{V}_0$, $F_j[R'^i\cap [n]]$ is not $\epsilon^3/2$-closed to $D(n_i,m_i,3)$.
\end{itemize}
By  3) and 4), we see that,  with probability $1-o(1)$, $H[R'^i \setminus \mathcal{U}]$ is not $\epsilon^4/4$-close to $H_D(n_i,m_i,3)$, proving part (a) of Claim C.
\medskip
	
It remains to show part (b) of Claim C, that is, to construct a perfect fractional matching $w_i$ in $H[R'^i]$ for each $1 \le i\le n^{1.1}$.
Our main tool is the stability result, Lemma~\ref{stability}.
	
Fix some $1\le i\le n^{1.1}$. We write $R'^i \cap [n] =\{x_1^i,...,x_{n_i}^i \} $ with $x_1^i<x_2^i<...<x_{n_i}^i$
and define $[d]_i : = \{x_1^i,x_2^i,...,x_d^i \}$ for any integer $d$.
We now state two simple inequalities for later use:
\begin{align}\label{equ:f(xy)}
f(x,y,3)\ge f(x,y-a,3) + \binom{a}{3} \mbox{ ~and~ } f(x,y,3)\ge f(x,y+a,3) -3ax^2
\end{align}
hold for any positive integers $x,y,a$ with $a<y$.

To construct a perfect fractional matching $w_i$ in $H[R'^i]$,
first we consider $v_j\in R'^i\cap \mathcal{V}_0$ and assign weights to the edges of $H[R'^i]$ containing $v_j$.
Using  1), and by \eqref{equ:f(xy)} and the fact that $\gamma \ll \epsilon \ll1$,
$$|F_j[R'^i\cap [n]]|=\Deg^i(v_j)\ge f(n_i, m_i, 3) -3\gamma n_i^3\ge f(n_i, m_i +\epsilon^{20}n_i, 3) -\epsilon^{16} n_i^3.$$
By 2) and 4), $F_j[R'^i\cap [n]]$ is not $\epsilon^3/2$-close to $S(n_i,m_i,3)$ or $D(n_i,m_i,3)$.
Since $|E(S(n_i,m_i+\epsilon^{20}n_i,3)) \setminus E(S(n_i,m_i,3))|\le \epsilon^{20}n_i^3$ and $|E(D(n_i,m_i+\epsilon^{20}n_i,3))\setminus E(D(n_i,m_i,3))|\le 3\epsilon^{20}n_i^3$,
we see that $F_j[R'^i\cap [n]]$ is not $\epsilon^4$-close to $S(n_i,m_i+\epsilon^{20}n_i,3 )$ or $D(n_i,m_i+\epsilon^{20}n_i,3)$.
Then by Lemma~\ref{stability} and the fact that $F_j$ is stable, $F_j[R'^i\cap [n]]$ contains a matching $M_j$ with $V(M_j) = [3m_i+3\epsilon^{20}n_i]_i$.
Now we assign weights $w_i(e)$ to all edges $e$ of $H[R'^i]$ with $v_j\in e$ as follows:
If $e\setminus v_j\in M_j$, then let $w_i(e) = \frac{1}{m_i+\epsilon^{20}n_i}$, and otherwise let $w_i(e) = 0$.
	
Next we consider $v_j\in R'^i\cap (\mathcal{V}\backslash \mathcal{V}_0)$.
By  1) and \eqref{equ:f(xy)}, we have $$|F_j[R'^i\cap [n]]|\ge f(n_i, m_i, 3) -3\gamma n_i^3 \ge f(n_i, m_i-6\gamma^{\frac{1}{3}} n_i, 3).$$
By Theorem~\ref{matching} and the fact that $F_j$ is stable, $F_j[R'^i\cap [n]]$ contains a matching $M_j$ with $V(M_j) = [3m_i-18\gamma^{\frac{1}{3}} n_i]_i $.
Then we assign weights $w_i(e)$ to all edges $e$ of $H[R'^i]$ with $v_j\in e$ as follows:
If $e\setminus v_j \in M_j$, then let $w_i(e) = \frac{1}{m_i-6\gamma^{1/3} n_i}\ $; and otherwise let $w_i(e) = 0$.

Note that for every $v_j\in R'^i\cap \mathcal{V}$, we have defined weights $w_i(e)$ for all edges $e\in H[R'^i]$ with $v_j\in e$, whose total weights equal one.
In the remaining proof, we want to extend this function $w_i$ to
entire $H[R'^i]$ to form a perfect fractional  matching.
We complete this in two steps.

First, we define a perfect fractional matching $w$ (as the {\it projection} of $w_i$) in the complete 3-graph $K$ on vertex set $R'^i\cap [n]$.
Note that a function $w: E(K)\to [0,1]$ is a perfect fractional matching if and only if $w(v):=\sum_{v\in f\in K}w(f)=1$ holds for every $v\in V(K)$.
Initially, we define a function $w': E(K)\to [0,1]$ such that, for each $f\in E(K)$, $w'(f):=\sum_e w_i(e)$ over all edges $e\in H[R'^i]$ with $f\subseteq e$ and $|e\cap \mathcal{V}|=1$.
Since $|\mathcal{V}_0|>\epsilon n$ and $\gamma \ll \epsilon$, it follows from the above definitions on $w_i$ that for any $v\in R'^i\cap [n]$,
$$w'(v):=\sum_{v\in f\in K} w'(f)\leq\frac{|\mathcal{V}_0|}{m_i+\epsilon^{20}n_i} + \frac{m_i-|\mathcal{V}_0| }{m_i-6\gamma^{\frac{1}{3}} n_i}\leq \frac{\epsilon n_i}{m_i+\epsilon^{20}n_i} + \frac{m_i -\epsilon n_i}{m_i-6\gamma^{\frac{1}{3}} n_i} <1.$$
Since $\epsilon \ll c$, we have $3m_i+3\epsilon^{20}n_i < n_i -4$.
So there exists a vertex set $\{a_1,a_2,a_3,a_4\}$ in $K$ such that $w'(a_i) = 0$, for $i\in [4]$.
Let $K'$ be the 3-graph obtained from K by deleting vertices $a_1,a_2,a_3,a_4$. Starting with $w:=w'$, we increase $w$ using the following iterations:
(i) pick a vertex $v$ in $V(K')$ with maximum $w(v)$;\footnote{Note that this maximum $w(v)$ is strictly less than 1.}
(ii) pick any edge $f\in K'$ containing $v$ and update $w(f)\leftarrow w(f)+1-w(v)$;
(iii) delete all vertices $u\in V(K')$ with $w(u)=1$ (which must include the vertex $v$) from $K'$;
(iv) if $|V(K')| \le 2$, then terminate; otherwise go to (i) again.
This must terminate in finitely many iterations and when it terminates, we obtain a fractional matching $w$ in $K$ such that $w(a_i) = 0$ for $i\in [4]$ and $|V(K')| \le 2$.
So there exist two vertices $b_1,b_2$ in $V(K)\setminus \{a_1,a_2,a_3,a_4\}$ such that for any vertex $v$ in $V(K)\setminus \{a_1,a_2,a_3,a_4,b_1,b_2\}$, $w(v) =1$.
We may suppose $1\ge w(b_1) \ge w(b_2)$.
Let $w(a_1,a_2,b_1) = 1-w(b_1)$, $w(a_1,a_2,b_2) = \frac{w(b_1)-w(b_2)}{2} $, $w(a_3,a_4,b_2) = 1-w(b_1) + \frac{w(b_1)-w(b_2)}{2}$, and $w(a_1,a_2,a_3) = w(a_1,a_2,a_4) =w(a_1,a_3,a_4) =w(a_2,a_3,a_4) =\frac{w(b_1)+w(b_2)}{6}$.
It is easy to check that $w$ is a perfect fractional matching in $K$.

Now we notice that $\sum_{f\in K} w'(f)=\sum_{\{e\in H[R'^i]: |e\cap \mathcal{V}|=1 \}} w_i(e)=|R'^i\cap \mathcal{V}|$
and, $\sum_{f\in K} w(f)=\frac{|R'^i\cap [n]|}{3}=|R'^i\cap (\mathcal{V}\cup \mathcal{U})|$.
Moreover, the neighborhood of any $u_j\in R'^i\cap\mathcal{U}$ in $H[R'^i]$ is the complete 3-graph $K$.
So we can partition the total weight $\sum_{f\in K}( w(f)-w'(f))=|R'^i\cap \mathcal{U}|$ into $|R'^i\cap \mathcal{U}|$ copies of 1's
(say each is represented by a set $E_j$ of edges in $K$),
and then for each $u_j\in R'^i\cap \mathcal{U}$, we assign the weight of each $f\in E_j$ to be $w_i(f\cup \{u_j\})$.
One can easily check that we obtain a perfect fractional matching $w_i$ in $H[R'^i]$.
This completes the proof of Claim~C.	
\qed

\medskip
  	
From Claims~B and C, we see that the sets $R'^i$ for $1\leq i\leq n^{1.1}$ satisfy (a)-(d) in Lemma~\ref{spaning graph}.
Then by Lemma~\ref{spaning graph}, there exists a spanning subgraph $H'$ of $H$ such that for each $v\in V(H)$, $d_{H'}(v) = (1+o(1))n^{0.2}$, and $\Delta_2(H')\le n^{0.1}$.
By Theorem~\ref{regular}, $H$ contains a matching $M_b$ such that $S = V(H)\setminus V(M_b)$ contains at most $\gamma' n'$ vertices.
Since $|S\cup M_a\cup M_b| =n'=3r'+3m'+s$ where $0\leq s\leq 2$,
we can delete at most $s$ elements from $S$ to get a subset $S'$ such that $3|S'\cap (\mathcal{V}'\cup \mathcal{U}')| = |S'\cap [n']|$.
By the setting at the beginning of the proof, Lemma~\ref{absorb} assures that $H^{*}(\mathcal{F})[V(M_a)\cup S']$ has a perfect matching,
which together with $M_b$ form a matching in $H^{*}(\mathcal{F})$ of size $r'+m'$.
Equivalently, this says that $\mathcal{F}$ admits a rainbow matching, finishing the proof of Lemma~\ref{lem:not close}.	
\end{proof}

\section{Proof of Theorem~\ref{main}}
Let $n$ be a sufficiently large integer.
Let $m$ be a positive integer with $n\geq 3m$ and let $\mathcal{F}= \{F_1, ... ,F_m\}$ be a family of $3$-graphs on the same vertex set $[n]$, such that $|F_i|> f(n,m,3)$ for each $i\in [m]$.
Suppose to the contrary that $\mathcal{F}$ does not admit a rainbow matching.
In view of Lemma~\ref{lem:stable}, we may assume that $\mathcal{F}$ is stable.
Then by Lemma~\ref{large-mat}, there exists an absolute constant
$c=c(3)>0$ such that $m\le  (1-c)n/3$.
By Theorem~\ref{n>k^2}, $m\ge n/27$.
Hence,
\begin{align}\label{equ:m}
n/27 \le m\le (1-c)n/3.
\end{align}

We now apply the following algorithm.
Initially, let $\mathcal{F}_0=\mathcal{F}$, $n_0=n$ and $m_0=m$.
We repeat the following iterations.
Suppose that we have defined $\mathcal{F}_i$, which contains $m_i$ 3-graphs on the same vertex set $[n_i]$.
\begin{itemize}
\item Step 1: Apply Corollary~\ref{coro:stable-saturated} to ${\cal F}_{i}$, we obtain a family
  ${\cal F}_{i+1}$ of $3$-graphs on the vertex set $[n_i]$  that is
  both stable and saturated, and set $n_{i+1}=n_i$ and $m_{i+1}=m_i$.

\item Step 2: If for any $F\in {\cal F}_{i+1}$ and any $v\in [n_{i+1}]$, $d_F(v)< \binom{n_{i+1}-1}{2}$, then set $t:=i+1$ and output ${\cal F}_{t},n_t,m_t$.

\item Step 3: If there exist $F\in \mathcal{F}_{i+1}$ and $v\in [n_{i+1}]$ such that $d_F(v)= \binom{n_{i+1}-1}{2}$,
then set $n_{i+1}'=n_{i+1}-1$, $m_{i+1}'=m_{i+1}-1$, and
$\mathcal{F}'_{i+1}:=\{F'-v: F'\in {\cal F}_i \setminus
\{F\}\}$. Relabel the vertices if necessary so that all 3-graphs in
${\cal F}'_{i+1}$ have the same vertex set $[n'_{i+1}]$.
Set ${\cal F}_{i}:={\cal F}'_{i+1}, n_i:= n'_{i+1}, m_i:=m'_{i+1}$ and go to Step 1.
\end{itemize}

Let $\mathcal{F}_t$ be the resulting family of 3-graphs, which
contains $m_t$ 3-graphs on the same vertex set $[n_t]$ and admits no
rainbow matching. 
By \eqref{equ:m}, we see that $n_t\geq n-m> cn$ is sufficiently large.
We also see from Lemma~\ref{lem:1} that
$$|F|> f(n_t,m_t,3) \mbox{ holds for any } F\in \mathcal{F}_t.$$

By definition, we see that $\mathcal{F}_t$ is stable and saturated such that for any $F\in \mathcal{F}_t$ and $v\in V_t$, $d_F(v)<\binom{n_t-1}{2}$.
On the other hand, by Lemma~\ref{claim:Delta}, it further holds that
$$d_{F}(v)\leq \binom{n_t-1}{2} - \binom{n_t-1-3(m_t-1)}{2} \mbox{ for any } F\in \mathcal{F}_t \mbox{ and } v\in V_t.$$
Since $n_t$ is sufficiently large, using Lemma~\ref{large-mat} and Theorem~\ref{n>k^2} again, we may assume that
$$n_t/27 \leq m_t\leq (1-c)n_t/3.$$

Now we choose $0<\epsilon \ll c$.
Since $\mathcal{F}_t$ satisfies the above properties,
by applying Lemmas~\ref{lemma:H_S(n,m)},~\ref{lemma:H_D(n,m)}
and~\ref{lem:not close}, we can conclude that $\mathcal{F}_t$ admits a rainbow matching.
This is a contradiction, completing the proof of Theorem~\ref{main}. \qed

\bigskip

\noindent{\bf Acknowledgements.} The authors would like to thank Peter Frankl for bringing the references \cite{AF,F87}.
The authors are grateful to the referees for their careful reading and helpful suggestions.

\end{document}